\title{Stability and convergence of the string method for computing minimum energy paths}
\author{Brian Van Koten\thanks{BvK was supported by the NSF RTG: Computational and Applied Mathematics in Statistical Science, award number 1547396.} \and Mitchell Luskin\thanks{This material is based upon work partly supported by the U.S. Department of Energy Office of Science grant DE-SC0012733.  Part of this research was also performed while the author was a Simons Participant at the NSF supported Institute for Pure and Applied Mathematics (IPAM),
and acknowledges support from the Simons Foundation. }}
\newcommand{\ignore}[1]{}
\newcommand{\Real}{\mathbb{R}}
\newcommand{\hd}{d_{\rm H}}
\newcommand{\dist}{d}
\newcommand{\eps}{\varepsilon}
\DeclareMathOperator{\proj}{proj}
\newcommand{\NN}{\mathscr{N}}
\newcommand{\U}{\mathscr{U}}
\newcommand{\LL}{\mathscr{L}}
\newcommand{\B}{\mathscr{B}}
\newcommand{\grad}{\nabla}
\newcommand{\I}{I}
\newcommand{\R}{R}
\newcommand{\mep}{{\rm MEP}}
\newcommand{\m}{{\rm M}}
\renewcommand{\l}{\ell}
\newcommand{\lstar}{\l^\ast}
\newcommand{\strings}{\mathbf{S}}
\newcommand{\CC}{\mathscr{M}}
\newcommand{\curves}{\mathbf{C}}
\newcommand{\tmin}{\tau}
\newcommand{\tmax}{t_{\rm{max}}}
\newcommand{\ns}{\bar{S}}
\newcommand{\C}{{\rm C}}
\newcommand{\lya}{W}
\begin{document}
\maketitle

\begin{abstract}
  We analyze the convergence of the string method of E, Ren, and Vanden-Eijnden~\cite{ERenVandenEijnden:SimplifiedString2007} to a minimum energy path.
  Under some assumptions relating to the critical points on the minimum energy path, we show that the string method initialized in a neighborhood of the minimum energy path converges to an arbitrarily small neighborhood of the minimum energy path as the number of images is increased.
\end{abstract}

\section{Introduction}

Many systems in chemistry, materials science, and physics pass through sequences of metastable states, undergoing transitions between states very rarely. Often the best way to understand these systems is to catalog the states together with the rates and most probable mechanisms of the transitions.
However, when transitions are rare events, the cost of calculating rates and mechanisms by direct simulation may be prohibitive.
A less costly alternative is to compute Minimum Energy Paths ($\mep$s) connecting metastable states. Under some conditions, one may interpret a $\mep$ as a representative transition mechanism~\cite[Section~1.3]{cameron_string_2011}, and given a $\mep$ one may estimate transition rates using approximations such as harmonic transition state theory~\cite{hanggi_reaction_rate_1990,vineyard_frequency_1957,berglund_kramers_2011}.
Several algorithms exist for computing $\mep$s, including the nudged elastic band method~\cite{HenkJonss:ImprovedTangentNEB2000} and the string method~\cite{ERenVandenEijn:String2002,ERenVandenEijnden:SimplifiedString2007}; see~\cite{sheppard_optimization_2008,henkelman_methods_2002} for surveys.
We analyze the simplified and improved string method~\cite{ERenVandenEijnden:SimplifiedString2007}, giving the first proof of convergence of a practical algorithm for computing $\mep$s.

A $\mep$ is a path connecting two local minima of the potential energy $V$ whose tangent is everywhere parallel to $\grad V$, except at critical points where $\grad V =0$; cf.\@ equation~\eqref{eq: characterization of mep}. In the simplest case, a $\mep$ follows a steepest ascent trajectory from one local minimum up to a saddle point and then a steepest descent trajectory down to the other local minimum; see Figure~\ref{fig: mep notation}. Each local minimum corresponds to a metastable state, and a $\mep$ yields information about transitions between the states which it connects. In particular, harmonic transition state theory provides approximations expressing transition rates in terms of $V$ and $D^2 V$ evaluated at saddle points and local minima~\cite{vineyard_frequency_1957,berglund_kramers_2011,hanggi_reaction_rate_1990}. The relevant saddle for a given transition between metastable states is the highest energy point on a $\mep$ connecting those states.

$\mep$s also yield insight into transition mechanisms.
Suppose that the system evolves under the overdamped Langevin dynamics
\begin{equation*}
  dx_t = -\grad V(x_t) dt + \sqrt{2 \beta^{-1}} dB_t
\end{equation*}
with inverse temperature $\beta$.
This is perhaps the simplest model of a metastable system where thermal fluctuations drive transitions.
If $V$ has exactly two minima and one saddle, then the unique $\mep$ minimizes the Wentzell--Friedlin action, so trajectories passing between neighborhoods of the minima typically lie close to the $\mep$ when $\beta$ is large~\cite[Section~1.3]{cameron_string_2011}. That is, the $\mep$ is representative of typical transitions between the metastable states.

Many algorithms have been proposed to compute $\mep$s~\cite{ERenVandenEijnden:SimplifiedString2007,HenkJonss:ImprovedTangentNEB2000,ERenVandenEijn:String2002,fischer_conjugate_1992,ulitsky_new_1990}. Several of these, including the string method~\cite{ERenVandenEijn:String2002,ERenVandenEijnden:SimplifiedString2007} and the nudged elastic band method~\cite{HenkJonss:ImprovedTangentNEB2000}, discretize the gradient descent dynamics on curves (GDDC). Under GDDC, each point of a curve evolves independently by gradient descent for $V$, cf.\@ equation~\eqref{eq: gradient descent on curves}. Any $\mep$ is a fixed point of GDDC. This is because $\mep$s consist of heteroclinic paths and stationary points of the gradient descent, and these sets are always invariant. Moreover, under some conditions, one can show that trajectories of GDDC converge to $\mep$s; see~\cite[Corollary~4]{cameron_string_2011} and Theorem~\ref{thm: uniform and asymptotic stabilit of mep} below. These observations suggest that to calculate $\mep$s, one might try to simulate GDDC until convergence.  However, the results of~\cite{cameron_string_2011} do not imply convergence of any discretization of GDDC.

We present the first convergence analysis of a practical, discretized method for calculating $\mep$s. Our analysis assumes that the $\mep$ passes alternately through local minima and saddles of index one as depicted in Figure~\ref{fig: mep notation} and that no degenerate critical points or critical points of higher index lie on the $\mep$.~\footnote{A saddle point $s$ of index $m$ is a critical point of the potential energy $V$ where the Hessian $D^2 V(s)$ is invertible and has exactly $m$ negative eigenvalues. A degenerate critical point is one where the Hessian is singular.}
 Under these conditions, we show in Theorem~\ref{thm: convergence} that the $\mep$ can be calculated to arbitrary precision using the simplified and improved string method~\cite{ERenVandenEijnden:SimplifiedString2007}. We allow only minima and  index one saddle points along the $\mep$, since when saddles of index two or higher are present, GDDC need not converge to a path~\cite{cameron_string_2011}. In that case, we do not expect discretizations of GDDC to converge either. Moreover, in most applications only saddles of index one are relevant. We discuss these issues at length in Remark~\ref{rem: justification of index one assumption}.

Our convergence analysis relies on new stability results for GDDC. In particular, we show that any $\mep$ alternating between minima and saddles of index one is uniformly and asymptotically stable under GDDC; cf.\@ Theorem~\ref{thm: uniform and asymptotic stabilit of mep}. Previous work was concerned primarily with $\omega$-limit sets of trajectories of GDDC or with global convergence to a $\mep$~\cite{cameron_string_2011}; cf.\@ Remark~\ref{rem: comparison with cameron article}. As far as we are aware, Theorem~\ref{thm: uniform and asymptotic stabilit of mep} is the first result on local stability of individual $\mep$s.

We do not address variations of the string method combining gradient descent dynamics with sampling, such as the string method in collective variables~\cite{maragliano_string_2006}, the on-the-fly string method~\cite{maragliano_--fly_2007}, or the finite temperature string method~\cite{e_finite_2005}. Neither do we address the nudged elastic band method~\cite{HenkJonss:ImprovedTangentNEB2000} or methods combining gradient descent dynamics with minimum mode following such as the climbing image nudged elastic band method~\cite{henkelman_climbing_2000}. However, we hope that our results will facilitate the analysis of these and similar methods in future work.

\section{Minimum Energy Paths and the String Method}\label{sec: algorithm}

The string method is a numerical algorithm for computing minimum energy paths.
Let $\Real^d$ be the \emph{state space} of the system. Let $V: \Real^d \rightarrow \Real$ be a \emph{potential energy}.
Let $m,m' \in \Real^d$ be local minima of $V$. A \emph{minimum energy path} ($\mep$) connecting $m$ to $m'$ is a path $\phi: [a,b] \rightarrow \Real^d$ so that $\phi(a) =m$, $\phi(b) =m'$, and
\begin{equation}\label{eq: characterization of mep}
  \grad V(\phi(\alpha)) - \left \langle \grad V(\phi(\alpha) ),\frac{\phi'(\alpha)}{\lVert \phi'(\alpha) \rVert} \right \rangle \frac{\phi'(\alpha)}{\lVert \phi'(\alpha) \rVert} = 0
\end{equation}
for all $\alpha \in [a,b]$ except those $\alpha$ so that $\phi(\alpha)$ is a critical point of $V$.
We allow kinks in a $\mep$ at critical points, so $\phi'(\alpha)$ need not exist when $\phi(\alpha)$ is a critical point, and in that case the left hand side of~\eqref{eq: characterization of mep} is undefined at $\alpha$.
Equation~\eqref{eq: characterization of mep} implies that either ${\grad V(\phi(\alpha))=0}$ or ${\grad V(\phi(\alpha)) \parallel \phi'(\alpha)}$. Therefore, $\mep$s are composed of heteroclinics of the gradient descent dynamics
\begin{equation}\label{eq: gradient descent ode}
  \dot x = -\grad V(x),
\end{equation}
connecting $m$ to $m'$ through a sequence of intermediate critical points of $V$; see Figure~\ref{fig: mep notation}.
In our analysis, we generally prefer to treat $\mep$s as curves instead of paths, even though they are called paths. A curve $\Gamma$ is a set of the form $\Gamma=\gamma([a,b])$ parametrized by some continuous path $\gamma:[a,b] \rightarrow \Real^d$. Each curve admits parametrization by many different paths.

\begin{figure}
  \includegraphics{./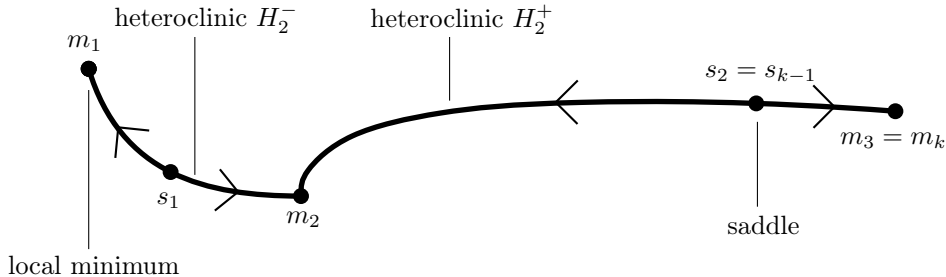}
  \caption{A minimum energy path ($\mep$) consists of a sequence of heteroclinic trajectories of the gradient flow connecting critical points. We assume in our analysis that the $\mep$ passes alternately through local minima $m_1, m_2, \dots, m_k$ and saddle points $s_1, s_2, \dots, s_{k-1}$, as pictured. However, in general, $\mep$s may contain maxima, higher index saddles, or degenerate critical points.
Moreover, saddles of index one may be connected directly by heteroclinics without a minimum in between as in Figure~\ref{fig: meps connecting saddles directly}. We denote the heteroclinics connecting $m_i$ with $s_{i-1}$ and $s_i$ by $H_i^-$ and $H_i^+$, respectively. The arrow on each heteroclinic points in the gradient descent direction.}
  \label{fig: mep notation}
\end{figure}

Several methods for finding $\mep$s, including the string method~\cite{ERenVandenEijn:String2002,ERenVandenEijnden:SimplifiedString2007} and nudged elastic band method~\cite{HenkJonss:ImprovedTangentNEB2000}, are based on discretizations of the gradient descent dynamics on curves (GDDC).
Under GDDC, every point of a curve evolves independently by gradient descent. To be precise, let $S_t: \Real^d \rightarrow \Real^d$ be the flow for~\eqref{eq: gradient descent ode}. That is, for $t\geq 0$ and $x \in \Real^d$, define $S_t(x) := z(t)$ for $z$ the solution of the initial value problem
\begin{align*}
  z' = -\grad V(z) \text{ and } z(0) = x.
\end{align*}
Let $\Gamma_0$ be a curve. Under GDDC, $\Gamma_0$ evolves by
\begin{equation}\label{eq: gradient descent on curves}
  \Gamma_t = S_t(\Gamma_0) \text{ for } t \geq 0.
\end{equation}

Any $\mep$ is a fixed point of GDDC. This is because $\mep$s consist of stationary points and heteroclinics, and these sets are always invariant under the flow. (However, no parametrization $\phi$ of a $\mep$ is invariant under the dynamics $\phi_t = S_t \circ \phi$ corresponding to GDDC. As $t$ increases, points on the heteroclinics fall towards the minima, so $\phi_t$ spends more and more time near minima, undergoing increasingly abrupt transitions through the saddle points.) Moreover, under some conditions, one can show that trajectories of GDDC converge to $\mep$s; cf.\@ Theorem~\ref{thm: uniform and asymptotic stabilit of mep} and~\cite{cameron_string_2011}. Thus, to calculate $\mep$s, one might try to simulate the GDDC until convergence. Both the string method and nudged elastic band method adopt this strategy.


We now introduce one particular variant of the string method, called the simplified and improved string method~\cite{ERenVandenEijnden:SimplifiedString2007}. Our first task is to define a finite dimensional family of paths connecting the local minima $m$ and $m'$. For $N \in \mathbb{N}$, we let
\begin{equation*}
  \strings^{N+1} := \left \{x \in  (\Real^d)^{N+1}: x_0 = m, x_{N} = m'\right\}.
\end{equation*}
We call components $x_i$ of $x \in \strings^{N+1}$ \emph{images},
and we call $x \in \strings^{N+1}$ a \emph{string of $N+1$ images connecting $m$ with $m'$}.
By convention, we index all strings of $N+1$ images by $0,1, \dots, N$.
For $x \in \strings^{N+1}$, we define the \emph{length} $\l(x) \in \Real^{N+1}$
and \emph{normalized length} $\lstar(x) \in \Real^{N+1}$ of $x$ by
\begin{equation*}
  \l(x)_i = \sum_{k = 1}^i |x_k - x_{k-1}|\text{ and } \lstar (x)_i := \frac{\l(x)_i}{\l(x)_N},
\end{equation*}
respectively.
We define
\begin{equation*}
  m(x):= \max_{i = 1, \dots, N+1} |x_i - x_{i-1}|
\end{equation*}
to be the \emph{string spacing}.

An \emph{interpolant} is an operator $\I$ which interpolates the images of a string to form a continuous path from $m$ to $m'$.
That is, for $\alpha \in \Real^{N+1}$ with $\alpha_i > \alpha_{i-1}$
and $x \in \strings^{N+1}$, $I(\alpha, x):[\alpha_0, \alpha_N]\rightarrow \Real^d$
is a continuous path with
\begin{equation*}
  \I(\alpha,x)(\alpha_i) = x_i
  \text{ for all } i = 0, 1, \dots, N.
\end{equation*}
Standard examples include the linear and cubic spline interpolants.
The linear interpolant is defined piecewise by the formula
\begin{equation}\label{eq: linear interpolant}
  \I(\alpha,x)(\beta) = \frac{\alpha_{i+1} - \beta}{\alpha_{i+1}-\alpha_i} x_i +\frac{\beta-\alpha_i}{\alpha_{i+1}-\alpha_i} x_{i+1} \text{ for } \beta \in [\alpha_i, \alpha_{i+1}].
\end{equation}
For $x \in \strings^{N+1}$, we let $\I x$ denote the curve parametrized by the path $I(\lstar(x), x)$.

The novel difficulty in discretizing GDDC is that when the images $x_i$ evolve by gradient descent, they fall away from the saddle, converging to local minima. To counteract this effect, one must periodically equalize the spacing between images. We call this procedure reparametrization of the string. The \emph{reparametrization operator} used in the simplified and improved string method is an easily computed analogue of parametrization by arc length for strings. Given $y \in \strings^{N+1}$ and an interpolant $I$, we define $\R:\strings^{N+1} \rightarrow \strings^{N+1}$  by
\begin{displaymath}
  R(y)_i := I(\lstar(y), y)\left( \frac{i}{N} \right) \mbox{ for } i = 0, \dots, N.
\end{displaymath}
We show in Lemma~\ref{lem: interp error} that if $I$ is the linear interpolant, then as long as $\frac{\l(y)_N}{N} \leq h$, the reparametrized string has $m(R(y)) \leq h$. Thus, the reparametrization operator moves the images close together if they are initially far apart.

Finally, to evolve strings by gradient descent, we require a numerical integrator for the gradient flow. We let $\ns_t$ denote this integrator. One might take $\ns_t$ to be a Euler's method, for example.

We will analyze the following version of the string method:
Pick $h>0$, $K>1$, and $\Delta t >0$.
As the notation suggests,
$h$ and $\Delta t$ are the spatial and temporal discretization parameters of the string method.
Let $x^0 \in \strings^{N+1}$ be an initial guess for a string following the minimum energy path.
Assume that
\begin{displaymath}
  m(x^0)= \leq h.
\end{displaymath}
The simplified and improved string method
iterates advancing the string by the numerical gradient flow $\ns_{\Delta t}$
and reparametrizing the string when the distance between images exceeds $Kh$.
To be precise,
suppose that for some $n \in \mathbb{N}$, a string $x^n$ has been computed.
To compute $x^{n+1}$, first let $y \in \strings^{N+1}$ with
\begin{displaymath} y_i :=  \ns_{\Delta t} (x^n_i).\end{displaymath}
Next, check whether $m(y) \leq Kh$.
If so, let
\begin{displaymath} x^{n+1} := y. \end{displaymath}
If not, then check whether $\frac{\l(y)_N}{N} \leq h$.
If $\frac{\l(y)_N}{N} \leq h$, let
\begin{displaymath}
  x^{n+1} := \R (y).
\end{displaymath}
As explained above, if $I$ is the linear interpolant,
then as long as $\frac{\l(y)_N}{N} \leq h$,
the reinterpolated string has $m(x^{n+1}) \leq h$.
If $\frac{\l(y)_N}{N} > h$, then more images must be added to the string to guarantee $m(x^{n+1}) \leq h$:
Let $N' := \left  \lceil \frac{\l(y)_N}{h} \right \rceil$ and define $x^{n+1} \in (\Real^d)^{N'+1}$ by
\begin{displaymath}
  x^{n+1}_i := I(\lstar(y), y)\left( \frac{i}{N'}\right) \mbox{ for } i = 0, \dots, N'.
\end{displaymath}
(For efficiency, it may also be desirable to reduce the number of points
on the string if $\frac{\l(x^n)_N}{N}$ falls below a certain threshold, but we do not consider this possibility.)

\section{Local Stability of Minimum Energy Paths}
Our goal in this section is to analyze the local stability of minimum energy paths under the GDDC,  providing a basis for a numerical analysis of the string method.
Throughout the rest of this work, we restrict our attention to a single minimum energy path, denoted $\m$, and satisfying the following conditions:

\begin{assumption}\label{asm: assumptions on mep}
  $\m$ is a minimum energy path passing through the local minima $m_1, \dots, m_k$ and the saddles $s_1, \dots, s_{k-1}$, as illustrated in Figure~\ref{fig: mep notation}.
The critical points $m_1, \dots, m_k$ and  $s_1, \dots, s_{k-1}$ are distinct and isolated. No other critical points lie on $\m$.
 Each of the saddles has Morse index one, i.e.\@ for all ${i=1, \dots, k-1}$, $D^2 V (s_i)$ has one negative eigenvalue and all its other eigenvalues are positive. Each local minimum is linearly stable, i.e.\@ $D^2V(m_i)$ is positive definite.
No two of the saddles $s_1, \dots, s_{k-1}$ are connected by a single heteroclinic along $\m$ without a minimum in between. That is, $\m$ passes through minima and saddles alternately.
$\m$ does not intersect itself, so it is homeomorphic to an interval.
\end{assumption}

\begin{remark}
    \label{rem: justification of index one assumption}
    We restrict our attention to $\mep$s passing through critical points of index less than or equal to one for two reasons:
  First, the GDDC may not converge to a $\mep$ if the initial curve lies near a saddle of index two or higher~\cite{cameron_string_2011}. In that case, we cannot expect the string method to converge either.
  Second, in most applications, only saddles of index one are relevant. Typically, one is most interested in the lowest energy saddle separating two minima $m$ and $m'$. To be precise, define  the \emph{communication height} between $m$ and $m'$ to be
  \begin{equation*}
    H(m,m') = \min_{\phi \in \C(m, m')} \max_{z \in \phi} V(z),
  \end{equation*}
  where $\C(m,m')$ is the set of continuous curves connecting $m$ with $m'$.
  In general, the maximizer $z'$ of the energy along any minimizing curve is a critical point of $V$. In fact, if the Hessian $D^2 V$ is continuous and invertible at $z'$, then $z'$ is a saddle of index one~\cite{murrell_symmetries_1968}. (There do exist molecular systems for which the communication height is attained at a critical point $z'$ where $D^2 V(z')$ is degenerate or where $V$ is not twice continuously differentiable~\cite{wales_limitations_nodate,wales_basins_1992}. However, these systems seem to be rare in practice.)
  Given the lowest energy saddle $z'$, one can estimate the rate of transitions from $m$ to $m'$ using transition state theory~\cite{hanggi_reaction_rate_1990, berglund_kramers_2011,vineyard_frequency_1957}.

  Although we assume that $\m$ alternates between minima and saddles, it is possible for a single heteroclinic to connect two saddles of index one with no local minimum in between. This occurs when the unstable manifold of the higher energy saddle under gradient descent lies within the stable manifold of the lower energy saddle.
  See~\cite{wales_energy_2003} for examples of molecular systems with $\mep$s of this type.
  These $\mep$s are not in general stable under GDDC.
  For example, consider the $\mep$ depicted in solid black in Figure~\ref{fig: meps connecting saddles directly}. The limit sets under GDDC of small perturbations of this curve may include both the solid black and the dashed heteroclinics.
\end{remark}

\begin{figure}
  \includegraphics{./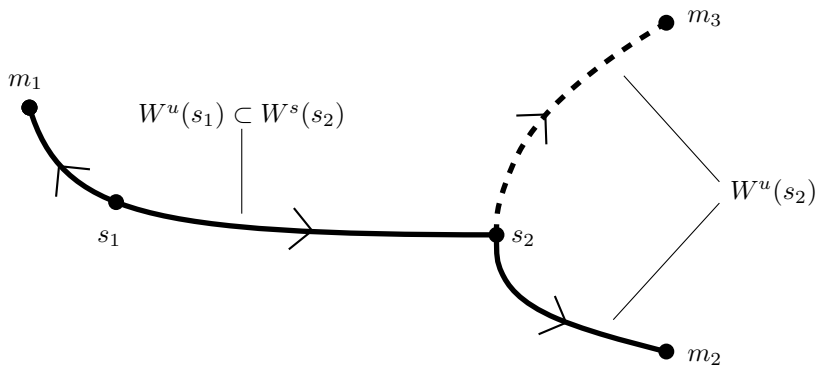}
  \caption{The solid black curve depicts a $\mep$ which connects two saddles of index one directly without passing through a local minimum in between. Here, the unstable manifold $W^u(s_1)$ at the saddle $s_1$ is contained within the stable manifold $W^s(s_2)$ at $s_2$. The $\mep$ might follow either of the two heteroclinics lying on $W^u(s_2)$, leading to the two minima $m_2$ and $m_3$. The arrows on each heteroclinic point in the gradient descent direction.}
  \label{fig: meps connecting saddles directly}
\end{figure}

In addition to our assumptions on the $\mep$ $\m$, we impose the following assumptions on $V$:

\begin{assumption}\label{asm: assumptions on V}
  The potential  $V: \Real^d \rightarrow \Real$ is three times continuously differentiable. The gradient $\grad V$ is globally Lipschitz with constant $L>0$ in the Euclidean norm; that is,
  \begin{equation*}
    \lvert \grad V(x)- \grad V(y) \rvert \leq L \lvert x - y\rvert,
  \end{equation*}
  where $\lvert \cdot \rvert$ denotes the Euclidean norm.
  The third derivatives of $V$ are bounded, i.e., there exists $C_3 >0$ so that
  \begin{equation}
    \sup_{x \in \Real^d} \left  \lvert \frac{\partial^3 V}{\partial x_i \partial x_j \partial x_k}(x) \right \rvert \leq C_3
  \end{equation}
  for all $i,j,k=1, \dots, d$.
\end{assumption}

Before we can discuss stability of GDDC, we require a metric to measure discrepancies between curves. We choose the Hausdorff distance:

\begin{definition}
  For compact sets $X,Y \subset \Real^d$, let
  \begin{displaymath}
    d(X,Y) := \sup_{x \in X} \inf_{y\in Y} |x-y|
  \end{displaymath}
  be the (one-sided) distance from $X$ to $Y$.
  Now define the Hausdorff distance $\hd(X,Y)$ between sets $X$ and $Y$ to be
  \begin{equation*}
    \hd(X,Y) = \max\{d(X,Y), d(Y,X)\}.
  \end{equation*}
\end{definition}

Our proof of convergence relies on the construction of a Lyapunov function for $\m$ in Hausdorff distance. We give a precise definition of Lyapunov function below, following the definitions given in~\cite{Yoshizawa:Stability1966,StuartHumphries:DynamicalSysNumerAn1996} for dynamical systems on $\Real^d$.

\begin{definition}
  Let $B \subset \Real^d$ be a set containing $\m$,
  and let $\curves(m_1,m_k,B)$ denote the set of all continuous curves connecting $m_1$ and $m_2$ and contained in $B$.
  We say that $\lya$ is a
  \emph{Lyapunov function for $\m$} on $B$
  if $\lya: \mathscr{W} \rightarrow [0,\infty)$ for some forward invariant set
  $\mathscr{W} \supset \curves(m_1,m_k,B)$ and the following statements hold:
  \begin{enumerate}
  \item There exists a constant $c>0$ such that for any $\phi \in \mathscr{W}$,
    \begin{equation}\label{eq: contraction property of Lyapunov function}
      \lya(S_t \phi) \leq e^{-ct} \lya(\phi) \mbox{ for all } t\geq 0.
    \end{equation}
  \item For any $\phi, \psi \in \mathscr{W}$,
    \begin{equation}\label{eq: Lipschitz property of Lyapunov function}
      |\lya(\phi) - \lya(\psi)| \leq \hd(\phi, \psi).
    \end{equation}
  \item There exists a strictly increasing, continuous function
    $\alpha: [0,\infty) \rightarrow [0,\infty)$
    with $\alpha(0) = 0$ such that,
    \begin{equation}\label{eq: equivalence with distance of Lyapunov function}
      \alpha(\hd(\m, \phi)) \leq \lya(\phi) \leq \hd(\m, \phi).
    \end{equation}
  \end{enumerate}
\end{definition}

By~\cite[Theorem~2.7.6]{StuartHumphries:DynamicalSysNumerAn1996}, an equilibrium point of a dynamical system on $\Real^d$ has a Lyapunov function if and only if it is uniformly and asymptotically stable.
(Such results are useful in control theory, where they are called converse Lyapunov theorems.)
We will generalize this result to prove the existence of a Lyapunov function for $\m$. First, we give the appropriate definitions of uniform and asymptotic stability:

\begin{definition}[Uniform stability]
  We say that  $\m$ is \emph{uniformly stable} in Hausdorff distance
  if and only if for every ${\eps > 0}$, there exists a $\delta > 0$ so that
  for $\phi \in \curves(m_1, m_k)$,
  ${\hd(\m, \phi) \leq \delta}$ implies $\hd(\m, S_t \phi) \leq \eps$
  for all $t \geq 0$.
\end{definition}

\begin{definition}[Asymptotic stability with uniform convergence]
  Let $B$ be a compact set with $\m \subset B \subset \Real^d$,
  and let $\phi \in \curves(m_1,m_k,B)$.
  We say that $\m$ is \emph{asymptotically stable on $B$ with uniform convergence}
  if for any $\eps >0$, there exists a time $T(\eps, B)$ independent of $\phi$
  so that $\hd(\m, S_t \phi) < \eps$ for all $t > T(\eps, B)$.
\end{definition}

In general, asymptotic stability with uniform convergence is stronger
than asymptotic stability.
Usually, one defines a point $x \in \Real^m$ to be asymptotically
stable on a set $U \subset \Real^m$ under a flow $R_t:\Real^m \times \Real \rightarrow \Real^m$
if and only if it is uniformly stable and
$y \in U$ implies $\lim_{t \rightarrow \infty} R_t y =x$.
Our definition additionally requires that the convergence of $R_t y$ to $x$
be uniform over all $y \in U$.
When the set $U$ is compact, asymptotic stability with
uniform convergence follows from asymptotic stability.
But in our case, the set $\curves(m_1,m_k,B)$ is not compact in the Hausdorff distance
even if $B \subset \Real^d$ is compact.

Our proofs of asymptotic and uniform stability require the following lemma relating the Hausdorff distance between a curve and $\m$ to a one-sided distance:

\begin{lemma}\label{lem: bound on distance gives bound on haus distance}
  For every $\eps >0$ there exists an $\eta >0$ such that
  if $\phi \in \curves(m_1, m_k)$ with $d(\phi, \m) < \eta$,
  then $d(\m,\phi) <\eps$, hence $\hd(\phi , \m) < \eps$.
\end{lemma}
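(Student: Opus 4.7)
The plan is to argue by contradiction, using the fact that $\m$ is homeomorphic to an interval (Assumption~\ref{asm: assumptions on mep}) to reduce the lemma to a topological separation of a thin tube around $\m$. Suppose the conclusion fails: there exist $\eps_0 > 0$, a sequence $\eta_n \downarrow 0$, and curves $\phi_n \in \curves(m_1, m_k)$ with $d(\phi_n, \m) < \eta_n$ but $d(\m, \phi_n) \geq \eps_0$. By compactness of $\m$ the map $z \mapsto \inf_{y \in \phi_n}|z-y|$ attains its supremum at some $z_n \in \m$, and this supremum is at least $\eps_0$, so $\phi_n \cap B(z_n,\eps_0) = \emptyset$. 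Because $m_1, m_k \in \phi_n$, each $z_n$ lies at distance at least $\eps_0$ from both endpoints of $\m$, and so after extracting a subsequence $z_n \to z^* \in \m \setminus \{m_1, m_k\}$.

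Next I would fix a homeomorphism $\psi : [0,1] \to \m$ with $\psi(0) = m_1$ and $\psi(1) = m_k$, set $t^* := \psi^{-1}(z^*) \in (0,1)$, and choose $\rho > 0$ so small that $\psi([t^* - 2\rho, t^* + 2\rho]) \subset B(z^*, \eps_0/8)$. The sets $A := \psi([0, t^* - \rho])$ and $B := \psi([t^* + \rho, 1])$ are then disjoint compact subsets of $\m$ by injectivity of $\psi$ (which encodes the non-self-intersection hypothesis in Assumption~\ref{asm: assumptions on mep}), so $\delta := \dist(A, B) > 0$. Writing $t_n := \psi^{-1}(z_n)$ and splitting $\m$ into $A_n := \psi([0, t_n - \rho])$, $B_n := \psi([t_n + \rho, 1])$, $C_n := \psi((t_n - \rho, t_n + \rho))$, for all large $n$ Hausdorff-continuity of $\dist$ gives $\dist(A_n, B_n) \geq \delta/2$, $C_n \subset B(z_n, \eps_0/4)$, and $\eta_n < \min\{\delta/4, \eps_0/4\}$. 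Letting $N_r(X) := \{y \in \Real^d : \dist(y,X) < r\}$, the sets $N_{\eta_n}(A_n)$ and $N_{\eta_n}(B_n)$ are then disjoint, $N_{\eta_n}(C_n) \subset B(z_n, \eps_0)$, and together they cover $\phi_n \subset N_{\eta_n}(\m)$. Since $\phi_n$ avoids $B(z_n, \eps_0)$, it lies in $N_{\eta_n}(A_n) \cup N_{\eta_n}(B_n)$; being a continuous image of an interval, it is connected, so it lies entirely in one of these pieces, contradicting $m_1 \in N_{\eta_n}(A_n) \cap \phi_n$ and $m_k \in N_{\eta_n}(B_n) \cap \phi_n$.

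The only subtle point is the topological separation: removing a small neighborhood of an interior point $z^*$ of $\m$ must split a thin tube around $\m$ into exactly two components, one containing each endpoint. This is where the full force of Assumption~\ref{asm: assumptions on mep} enters; if $\m$ had branches or self-intersections then $\phi_n$ could detour around $z^*$ through another part of $\m$, and the connectedness argument would fail. Everything else in the argument is a routine compactness and continuity exercise.
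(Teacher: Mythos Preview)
Your argument is correct and takes a genuinely different route from the paper's proof. The paper constructs, with some effort, an explicit quasi-tubular neighborhood of $\m$: it uses the invariant manifold theorem to show each heteroclinic segment $M_i$ is a $C^2$-embedded submanifold, invokes the tubular neighborhood theorem to obtain retractions $r_i$, patches these together with balls around the minima into a single continuous map $\Pi:\mathscr{U}\to[0,1]$, and then applies the intermediate value theorem to $\Pi\circ\phi$ to show $\phi$ hits a point near every $x\in\m$. Your proof, by contrast, is a pure point-set topology argument: contradiction plus compactness to locate a missed point $z^*$, then a connectedness/separation argument exploiting only that $\m$ is a simple arc (the non-self-intersection clause of Assumption~\ref{asm: assumptions on mep}).

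Your approach is more elementary and more general --- it applies to any simple arc $\m$ with no smoothness required, bypassing the invariant manifold and tubular neighborhood theorems entirely. The paper's approach is more constructive and, being built from the geometry of the heteroclinics and the minima, could in principle yield quantitative information about $\eta$ in terms of $\eps$; your contradiction argument gives existence of $\eta$ but no explicit control. For the purposes of the stability and convergence results downstream in the paper, your version would suffice.
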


\begin{proof}
  We present only a sketch of the proof here. The details appear in Appendix~\ref{apx: proof of one-sided distance lemma}.
  The existence of a tubular neighborhood of $\m$ would suffice: Given a tubular neighborhood $\mathscr{U}$ of radius $\eps$, any path $\gamma \in \curves(m_1,m_k, \mathscr{U})$ would pass through every normal disk in the neighborhood by the intermediate value theorem. Thus, for any $x$ on $\m$, there would be a point $y$ on $\gamma$ lying in the normal disk centered at $x$ with $\lVert y-x\rVert \leq \eps$. This would prove the result, since for $\eta$ small enough, $d(\phi, \m) \leq \eta$ would imply $\phi \subset \mathscr{U}$.
  Unfortunately, $\m$ may not have a tubular neighborhood, since $\m$ may not be differentiable at the local minima $m_i$.
  In Appendix~\ref{apx: proof of one-sided distance lemma}, we develop an argument based on an analogue of a tubular neighborhood consisting of balls surrounding the minima connected by tubes where $\m$ must be smooth.
\end{proof}

By Lemma~\ref{lem: bound on distance gives bound on haus distance}, to prove uniform and asymptotic stability in Hausdorff distance under GDDC, it suffices to prove the analogous properties for $\m$ under the gradient flow $S_t$ in a one-sided distance; see Lemma~\ref{cor: uniform stability} and Lemma~\ref{cor: asymptotic stability of gradient descent}. Lemma~\ref{lem: construction of stable neighborhoods} is our first step in proving these stability results.

\begin{lemma}\label{lem: construction of stable neighborhoods}
  For any point $p \in \m$ and any $\eps >0$, there exists an open set $\U_\eps(p)$ containing $p$ so that $x \in \U_\eps(p)$ implies $d(S_t(x),\m) \leq \eps$ for all $t\geq 0$.
\end{lemma}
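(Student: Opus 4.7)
The plan is a case analysis based on the type of the point $p \in \m$, which under Assumption~\ref{asm: assumptions on mep} is either a local minimum $m_i$, a saddle $s_j$, or a regular (non-critical) point lying on one of the heteroclinics. The three cases are handled by increasingly delicate arguments: a Lyapunov estimate at minima, continuous dependence on initial conditions at regular points, and a local hyperbolic analysis at saddles. The minimum case will serve as a terminal building block for the other two.

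If $p = m_i$, then $D^2V(m_i)$ is positive definite by Assumption~\ref{asm: assumptions on mep}, so $V$ has a strict local minimum at $m_i$ and there exist $r, c > 0$ with $V(x) - V(m_i) \geq c \lvert x - m_i\rvert^2$ on $\overline{B_r(m_i)}$. Standard ODE arguments show that for $r$ sufficiently small the connected component of $\{V \leq V(m_i) + c r^2\}$ containing $m_i$ is forward-invariant under $S_t$ and is contained in $B_r(m_i)$. Shrinking $r$ below $\eps$ and letting $\U_\eps(m_i)$ be the interior of this invariant sub-level set gives $\lvert S_t(x) - m_i\rvert < \eps$ for all $x \in \U_\eps(m_i)$ and $t \geq 0$; since $m_i \in \m$, this yields $d(S_t(x), \m) \leq \eps$.

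For a regular point $p$ on a heteroclinic $H \subset \m$, the trajectory $S_t(p)$ traces out $H$ and converges to one of the minima $m_i$ as $t \to \infty$. I would choose $T$ so that $S_T(p) \in \U_{\eps/2}(m_i)$, using the neighborhood from the minimum case. Continuous dependence on initial conditions over the compact interval $[0,T]$ then provides $\delta > 0$ so that $B_\delta(p)$ both maps into $\U_{\eps/2}(m_i)$ at time $T$ and stays within Euclidean distance $\eps$ of the trajectory $\{S_t(p)\}_{t \in [0,T]} \subset \m$. For $t \in [0,T]$ the point $S_t(p) \in \m$ certifies $d(S_t(x),\m) \leq \eps$, and for $t \geq T$ the minimum case applied to $S_T(x)$ gives $d(S_t(x),\m) \leq \eps/2$. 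Setting $\U_\eps(p) = B_\delta(p)$ concludes this case.

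The saddle case $p = s_j$ is the main obstacle, since trajectories can linger arbitrarily long near $s_j$ before being pushed out along the one-dimensional unstable direction. Here I would invoke the stable manifold theorem: since $D^2V(s_j)$ has one negative and $d-1$ positive eigenvalues, $s_j$ admits a one-dimensional local unstable manifold $W^u_{\mathrm{loc}}$ and a $(d-1)$-dimensional local stable manifold $W^s_{\mathrm{loc}}$. The two branches of $W^u_{\mathrm{loc}}$ are precisely the initial segments of the two heteroclinics leaving $s_j$ under gradient descent, so $W^u_{\mathrm{loc}} \subset \m$. In adapted coordinates splitting unstable and stable directions, exponential expansion of the unstable component and contraction of the stable component yield a quantitative $\lambda$-lemma-type statement: for every $\eta > 0$ there exists $\delta > 0$ such that any trajectory starting in $B_\delta(s_j)$ either remains in $B_\delta(s_j)$ for all time (the measure-zero stable-manifold case) or exits at some finite $t^\ast$ at a point within distance $\eta$ of $W^u_{\mathrm{loc}}$. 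Choosing $\delta < \eps/2$ handles $t \in [0, t^\ast]$ via $s_j \in \m$, and choosing $\eta$ small enough that the exit point lies in the $\U_{\eps/2}$-neighborhood of some regular point on the outgoing heteroclinic (from the previous case) closes the argument for $t > t^\ast$. The delicate ingredient here is the uniform quantitative control on both the exit location and the bound $\lvert S_t(x) - s_j\rvert \leq \delta$ over all initial data in $B_\delta(s_j)$, since passage times near the saddle blow up as the initial data approach $W^s_{\mathrm{loc}}$; this forces one to work with a Lyapunov-type function adapted to the hyperbolic splitting rather than with the Euclidean norm alone.
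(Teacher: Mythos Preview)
Your three-case decomposition matches the paper's exactly, and the minimum and heteroclinic cases are essentially identical to the paper's (the paper cites linear stability for an invariant ball at minima, and uses the explicit Gr\"onwall bound $\lvert S_t(p)-S_t(x)\rvert \leq e^{Lt}\lvert p-x\rvert$ for the finite-time tracking on heteroclinics, which is your continuous-dependence step). The substantive difference is the tool at the saddle: the paper invokes the Hartman--Grobman theorem to obtain a topological conjugacy $\Psi$ to the linear flow $\dot v = -D^2V(s_j)v$, then works in a cube $\B$ in linearized coordinates where the stable-direction distance $\lvert \LL_t x - \proj_{v_1}\LL_t x\rvert$ is manifestly decreasing. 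This makes the exit analysis completely explicit---trajectories stay in a thin slab $\CC$ about the unstable line and exit only through two fixed faces, landing near predetermined heteroclinic points $y_1,y_2$ whose $\delta(y_i)$ from the heteroclinic case are fixed in advance. Your stable-manifold/$\lambda$-lemma route reaches the same conclusion but, as you formulate it, the exit is from the \emph{same} ball $B_\delta(s_j)$ you start in; the handoff point $q$ on the heteroclinic then sits at distance $\approx\delta$ from $s_j$, so $\delta(q)\to 0$ as $\delta\to 0$, and the requirement $\eta<\delta(q)$ becomes circular. The fix---which your final sentence anticipates---is to decouple the initial neighborhood from the exit region, tracking trajectories until they leave a box of \emph{fixed} size adapted to the splitting; that is precisely what Hartman--Grobman and the slab $\CC\subset\B$ buy in the paper's argument.
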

\begin{proof}
  We distinguish three cases: $p$ may be a local minimum, it may lie on a heteroclinic, or it may be a saddle.

  Suppose that $p$ is a local minimum. Since we assume that each local minimum is linearly stable, there exists some $0<\delta\leq\eps$ so that $x \in B_\delta(p)$ implies $S_t(x) \in B_\delta(p)$ for all $t\geq 0$. Thus, we may take $\U_\eps(p) = B_\delta(p)$.

  Now suppose that $p$ lies on the heteroclinic connecting the minimum $m$ with the saddle $s$.
  Let $0< \delta \leq \eps$ be small enough that $x \in B_\delta(m)$ implies $S_t(x) \in B_\delta(m)$ for all $t\geq 0$.
  We have
  \begin{equation*}
    \lim_{t \rightarrow \infty} S_t(p) = m,
  \end{equation*}
  so
  \begin{equation*}
    t(p) := \inf \{t\geq 0 : S_t(p) \in B_{\delta/2}(m)\}
  \end{equation*}
  is finite. Recall that we assume $-\grad V$ is Lipschitz with constant $L$, and define
  \begin{equation}\label{eq: delta on heteroclinic}
    \delta(p) := \frac{\delta}{2 \exp(t(p)L)}.
  \end{equation}
  If $x \in B_{\delta(p)}(p)$, then for all $t \leq t(p)$,
  \begin{displaymath}
    \lvert S_t (p) - S_t(x)\rvert \leq \exp(L t) \delta(p) \leq \exp(L t(p)) \delta (p) \leq \frac{\delta}{2} < \eps.
  \end{displaymath}
  Therefore, since $S_t(p) \in \m$,  $d(S_t(x), \m)< \eps$ when $0 \leq t \leq t(p)$.
  Moreover,
  \begin{equation*}
    \lvert S_{t(p)}(x) -m \rvert \leq \lvert S_{t(p)}(x) - S_{t(p)}(p) \rvert  + \lvert S_{t(p)}(p) - m \rvert \leq \delta,
  \end{equation*}
  so $S_t(x) \in B_\delta(m)$ for all $t \geq t(p)$, hence $d(S_t(x), \m) \leq \eps$ for $t \geq t(p)$.
  Thus, we may take $\U_\eps(p) = B_{\delta(p)}(p)$ when $p$ lies on a heteroclinic.

  Finally, suppose that $p=s_i$ is the saddle point lying between the local minima $m_i$ and $m_{i+1}$. Let $\mathscr{L}_t$ be the flow for $\dot v = -D^2 V (p)v$, and let $v_1$ be the eigenvector corresponding to the unique negative eigenvalue of $D^2V(p)$. By the Hartman--Grobman theorem, there exists an open ball $B_r(0) \subset \Real^d$ and a homeomorphism $\Psi: B_r(0) \rightarrow \Real^d$ so that
  \begin{displaymath}
    S_t \circ \Psi(y) = \Psi \circ \LL_t (y)
  \end{displaymath}
  for all $y \in B_r(0)$ and $t\geq 0$ so that $\LL_t (y) \in B_r(0)$. See Figure~\ref{fig: hartman grobman} for an illustration of the mapping $\Psi$ and the relations between the various neighborhoods defined below. Choose $0<\rho < r$. Let $\B$ be the open cube inscribed in $\partial B_\rho (0)$ with edges parallel to the eigenvectors of $D^2V(p)$. We observe that $\Psi(\Real v_1 \cap \B)$ is a subset of $\m$. In addition, $\Real v_1 \cap \partial \B = \{x_1, x_2\}$, where $y_1 := \Psi(x_1)$ lies on the heteroclinic connecting $p$ with $m_i$ and $y_2 := \Psi(x_2)$ lies on the heteroclinic connecting $p$ with $m_{i+1}$. Since $\Psi$ is uniformly continuous on $\B$, there exists $\gamma >0$ such that for all $x, y \in \B$,
  \begin{equation}\label{eq: modulus of continuity of psi}
    \lvert x-y \rvert <\gamma \text{ implies } \lvert \Psi(x) - \Psi(y)\rvert < \min\{\delta(y_1), \delta(y_2)\},
  \end{equation}
  (Here, $\delta(y_1)$ and $\delta(y_2)$ are defined as in~\eqref{eq: delta on heteroclinic} above.)
  Now let
  \begin{displaymath}
    \CC :=  \{ x \in \B : \lvert x - \proj_{v_1} x \rvert < \gamma \}.
  \end{displaymath}

  We claim that
  \begin{displaymath}
    \U_\eps(p) : = \Psi(\CC)
  \end{displaymath}
  has the desired property.
  To see this, observe that $\lvert \LL_t x - \proj_{v_1} \LL_t x \rvert$ decreases with $t$, since $\Real v_1$ is the unstable subspace of $-D^2 V(p)$. In particular, if $x \in \CC$, then
  \begin{equation}\label{eq: upper bound on projection to unstable space}
    |\LL_t x - \proj_{v_1} \LL_t x| < \gamma
  \end{equation}
  for all ${t \geq 0}$.
  Now let $y \in \U_\eps(p)$, and write $x = \Psi^{-1}(y)$.
  For any $t \geq 0$ such that $S_t(y) \in \U_\eps(p)$, we have
  \begin{align*}
    d(S_t(y), \m) &\leq \lvert S_t(y) - S_t(\Psi(\proj_{v_1}(x))) \rvert \\
                  &= \lvert \Psi( \LL_t(x)) - \Psi (\LL_t (\proj_{v_1}(x))) \rvert \\
                  &\leq \min\{\delta(y_1),\delta(y_2)\}\\
                  &\leq \eps.
  \end{align*}
  The first inequality follows since $\Psi (\proj_{v_1} x) \in \m$, and the second to last follows from~\eqref{eq: modulus of continuity of psi} and~\eqref{eq: upper bound on projection to unstable space}.

  We must now show that $d(S_t(y), \m) \leq \eps$ for all $t \geq 0$, not merely $t$ such that $S_t(y) \in \U_\eps(p)$. If $y$ lies on the stable manifold $W^s (p)$ of $p$, then $S_t(y)  \in \U_\eps(p)$ for all $t \geq 0$, so suppose that ${y \notin W^s(p)}$.
  In that case, we claim that the trajectory $S_t(y)$ passes through either $B_{\delta (y_1)} (y_1)$ or $B_{\delta (y_2)} (y_2)$ as it exits $\U_\eps(p)$. To see this, we observe that by~\eqref{eq: upper bound on projection to unstable space}, the trajectory $\LL_t x$ passes through either $ B_\gamma(x_1)$ or $B_\gamma(x_2)$ as it leaves $\CC$. That is, for $T \geq 0$ the unique time such that $\LL_T x \in \partial \CC$, we have $\LL_T x \in B_\gamma(x_1) \cup B_\gamma(x_2)$.
  Therefore, by~\eqref{eq: modulus of continuity of psi},
  \begin{displaymath}
    S_T(y) = \Psi(\LL_T x) \in B_{\delta (y_1)} (y_1) \cup B_{\delta (y_2)} (y_2).
  \end{displaymath}
  It follows that $d(S_t(y), \m) \leq \eps$ for all $t \geq T$.
  Thus, in fact, ${d (S_t(y), \m) \leq \eps}$ for all $t \geq 0$.
\end{proof}

\begin{figure}
  \includegraphics[width=\textwidth]{./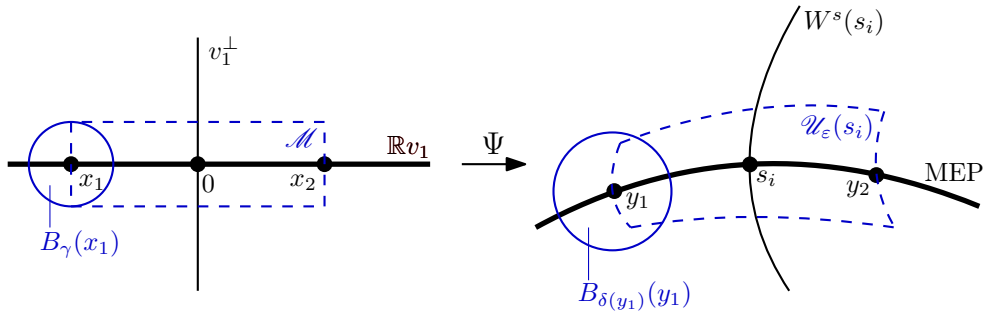}
  \caption{An illustration of the use of the Hartman--Grobman theorem in the proof of Lemma~\ref{lem: construction of stable neighborhoods}.}
  \label{fig: hartman grobman}
\end{figure}

Uniform stability of $\m$ under gradient descent in a one-sided distance is an immediate corollary of Lemma~\ref{lem: construction of stable neighborhoods}:

\begin{lemma}
  \label{cor: uniform stability}
  For every $\eps>0$, there exists a $\delta>0$ so that
  \begin{equation*}
    d(x, \m) \leq \delta \text{ implies } d(S_t(x), \m) \leq \eps \text{ for all } t \geq 0.
  \end{equation*}
\end{lemma}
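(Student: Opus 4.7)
The plan is to deduce this from Lemma~\ref{lem: construction of stable neighborhoods} by a standard compactness argument. Fix $\eps > 0$. By Lemma~\ref{lem: construction of stable neighborhoods}, for every $p \in \m$ there is an open set $\U_\eps(p)$ containing $p$ such that $x \in \U_\eps(p)$ implies $d(S_t(x), \m) \leq \eps$ for all $t \geq 0$. The collection $\{\U_\eps(p) : p \in \m\}$ is an open cover of $\m$, so if we can show that $\m$ is compact, we may extract a finite subcover $\U_\eps(p_1), \dots, \U_\eps(p_n)$ and set $\mathscr{V} := \bigcup_{i=1}^n \U_\eps(p_i)$.

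To see that $\m$ is compact, recall that by Assumption~\ref{asm: assumptions on mep}, $\m$ is the union of the finite set of critical points $\{m_1, \dots, m_k, s_1, \dots, s_{k-1}\}$ together with the $2(k-1)$ heteroclinic trajectories $H_i^{\pm}$. Each heteroclinic, together with its two endpoints, is the image of a compact interval (after reparametrizing the gradient flow to run over a finite time) under a continuous map, and is therefore compact. Hence $\m$ is a finite union of compact sets, which is compact.

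Now $\mathscr{V}$ is open and contains the compact set $\m$, so its complement $\mathscr{V}^c$ is closed and disjoint from $\m$. The continuous function $x \mapsto d(x, \mathscr{V}^c)$ attains a positive minimum $\delta > 0$ on $\m$. It follows that any point $x$ with $d(x, \m) \leq \delta/2$ lies in $\mathscr{V}$, hence in some $\U_\eps(p_i)$, and therefore $d(S_t(x), \m) \leq \eps$ for all $t \geq 0$. Choosing $\delta$ to be this value (or half of it) completes the proof.

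I expect no serious obstacles here: the only step that requires any care is verifying compactness of $\m$, and this follows directly from the structural assumptions in Assumption~\ref{asm: assumptions on mep}. The real work is already contained in Lemma~\ref{lem: construction of stable neighborhoods}, where the three cases (minimum, heteroclinic, saddle) were handled individually; the present lemma merely globalizes those pointwise neighborhoods via finite covers.
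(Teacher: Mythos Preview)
Your proof is correct and follows essentially the same approach as the paper: both use Lemma~\ref{lem: construction of stable neighborhoods} to produce an open cover of $\m$, then invoke compactness of $\m$ to extract a uniform $\delta$. The only cosmetic differences are that the paper works with the full union $\bigcup_{p\in\m}\U_\eps(p)$ and minimizes the lower-semicontinuous inradius function $r(x)=\sup\{\rho:B_\rho(x)\subset\mathscr{U}\}$ over $\m$, whereas you pass to a finite subcover and use the distance to the complement; also, compactness of $\m$ is immediate since $\m$ is by definition the image of a compact interval under a continuous path, so your decomposition into heteroclinics is unnecessary (though not wrong).
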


\begin{proof}
  Let $\mathscr{U}_\eps(p)$ be defined as in Lemma~\ref{lem: construction of stable neighborhoods}. The open set
  \begin{equation*}
    \mathscr{U} := \cup_{p \in \m} \mathscr{U}_\eps(p)
  \end{equation*}
  contains $\m$, and $x \in \mathscr{U}$ implies $d(S_t(x), \m) \leq \eps$ for all $t \geq 0$. Now for any $x \in \m$, define
  \begin{equation*}
    r(x) := \sup \{\rho > 0 : B_\rho(x) \subset \mathscr{U} \}.
  \end{equation*}
  The function $r(x)$ is lower semicontinuous, and $\m$ is compact, so $r(x)$ attains a minimum $\delta$ on $\m$. We have $\delta >0$ since $\mathscr{U}$ is open. Moreover,  $d(x, \m) \leq \delta$ implies $x \in \mathscr{U}$, so $d(S_t(x), \m) \leq \eps$ for all $t \geq 0$, as desired.
\end{proof}

Lemma~\ref{lem: construction of stable neighborhoods} also implies asymptotic stability of $\m$ under gradient descent in a one-sided distance:

\begin{lemma}
  \label{cor: asymptotic stability of gradient descent}
  There exist an open set $B$ with $\m \subset B$ and a function ${T:(0,\infty) \rightarrow (0,\infty)}$ so that for any $\eps >0$ and $x \in B$,
  \begin{equation*}
    d(S_t(x), \m) \leq \eps \text{ whenever } t \geq T(\eps).
  \end{equation*}
\end{lemma}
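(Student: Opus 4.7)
The plan is to bootstrap from the one-sided uniform stability of Lemma~\ref{cor: uniform stability} to uniform asymptotic convergence. First I would use uniform stability to trap every trajectory starting near $\m$ in a bounded tube around $\m$ that contains only finitely many critical points; then a LaSalle-type argument would force each trapped trajectory to converge to a critical point on $\m$; finally an upper semicontinuity plus compactness argument would promote these pointwise convergence times into a uniform one.

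For the setup, I would pick $\eps_0 > 0$ small enough that the closed tube $\{y : d(y,\m) \leq \eps_0\}$ contains no critical points of $V$ other than $m_1, \dots, m_k, s_1, \dots, s_{k-1}$; this is possible by the isolation assumed in Assumption~\ref{asm: assumptions on mep}. Lemma~\ref{cor: uniform stability} then supplies $\delta_0 \in (0, \eps_0]$ such that $d(x,\m) \leq \delta_0$ implies $d(S_t(x), \m) \leq \eps_0$ for all $t \geq 0$, and I would take $B := \{x : d(x,\m) < \delta_0\}$, which is open, contains $\m$, and has compact closure. For $x \in \bar B$, the orbit $S_t(x)$ is confined to the bounded set $\{y : d(y,\m) \leq \eps_0\}$, so $\omega(x)$ is a non-empty, compact, connected, flow-invariant set. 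Since $V$ strictly decreases away from the critical set and is bounded below on the tube, the LaSalle invariance principle gives that $\omega(x)$ consists entirely of critical points of $V$, hence is a connected subset of the finite isolated set $\{m_1, \dots, m_k, s_1, \dots, s_{k-1}\}$. Therefore $\omega(x)$ reduces to a single critical point on $\m$, so $d(S_t(x), \m) \to 0$.

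For the uniform bound, given $\eps > 0$, I would use Lemma~\ref{cor: uniform stability} again to pick $\eta \in (0, \eps]$ such that $d(y,\m) \leq \eta$ implies $d(S_t(y), \m) \leq \eps$ for all $t \geq 0$, and set
\begin{equation*}
  T_x := \inf\{t \geq 0 : d(S_t(x), \m) < \eta\}.
\end{equation*}
The previous step gives $T_x < \infty$ on $\bar B$. The function $x \mapsto T_x$ is upper semicontinuous: for any $s > T_x$ with $d(S_s(x), \m) < \eta$, continuous dependence of the flow on initial conditions and continuity of $d(\cdot, \m)$ yield $d(S_s(y), \m) < \eta$ for $y$ in a neighborhood of $x$, so $T_y \leq s$. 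Since an upper semicontinuous function on a compact set that is finite everywhere is bounded above, $T(\eps) := \sup_{x \in \bar B} T_x$ is finite. For $x \in B$ and $t \geq T(\eps) \geq T_x$, continuity at $T_x$ gives $d(S_{T_x}(x), \m) \leq \eta$, and the choice of $\eta$ then yields $d(S_t(x), \m) \leq \eps$.

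The main obstacle will be the LaSalle step establishing that individual trajectories in $\bar B$ converge to a single critical point on $\m$: this combines confinement to the tube, the isolation of the critical points inside that tube, and the connectedness of $\omega$-limit sets for autonomous flows to rule out drift between distinct critical points. Once this pointwise convergence and the upper semicontinuity of $T_x$ are in hand, the compactness of $\bar B$ upgrades convergence to uniform convergence essentially for free.
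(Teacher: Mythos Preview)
Your argument is correct, and the upper-semicontinuity/compactness endgame matches the paper's exactly. The difference lies in how you establish that each trajectory in $\bar B$ converges to a critical point on $\m$. The paper does not invoke LaSalle; instead it reuses the neighborhoods $\U_1(p)$ built in the proof of Lemma~\ref{lem: construction of stable neighborhoods} (only for the critical points $p$), sets $\U$ to be their union, and takes $B$ inside a finite union $\cup_{t=0}^T S_{-t}(\U)$, compactness of $\m$ furnishing the finite $T$. Finiteness of the entry time $t(x;\eps)$ then comes from the detailed structure of those $\U_1(p)$---in particular the Hartman--Grobman analysis at the saddles---which guarantees that trajectories in $\U$ converge to critical points. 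Your route replaces all of that with the observation that trajectories confined to a compact tube containing only the critical points on $\m$ must, by LaSalle and the connectedness of $\omega$-limit sets, converge to a single one of them. This is more elementary in that it needs only Lemma~\ref{cor: uniform stability} and standard gradient-flow facts, not the Hartman--Grobman machinery; the paper's version has the advantage of recycling work already done and sidesteps any appeal to $\omega$-limit-set theory.
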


\begin{proof}
  For each $p \in \m$, let $\U_{1}(p)$ be the set constructed in the proof of Lemma~\ref{lem: construction of stable neighborhoods} with $\eps=1$.
  Define
  \begin{equation*}
    \U:= \cup_{p \in \{m_1, \dots, m_k, s_1, \dots, s_{k-1}\}} \U_{1}(p).
  \end{equation*}
  We claim that for some time $T$ sufficiently large, the set
  \begin{equation}\label{eq: definition of set for asymptotic stability}
    B' := \cup_{t=0}^T S_{-t}(\U)
  \end{equation}
  contains $\m$.
  To see this, we observe that $\cup_{t=0}^\infty S_{-t}(\U)$ is an open cover of $\m$. The sets $S_{-t}(\U)$ are open, since $\U$ is open and the flow $S_{t}$ is a diffeomorphism. Moreover, these sets cover $\m$, since they contain the stationary points, and for each ${p \in \m}$, the trajectory $S_t(p)$ converges to a stationary point. Therefore, since $\m$ is compact it follows that $B'$ contains $\m$ for some finite $T>0$.

  Now fix $\eps >0$. Let $B$ be an bounded open subset of $B'$ with $\m \subset B$ and such that $\bar B$, the closure of $B$, is contained in $B'$.
  By Lemma~\ref{cor: uniform stability}, there exists an open neighborhood $\mathcal{V}_\eps$ of $\m$ so that $x \in \mathcal{V}_\eps$ implies $d(S_t(x),\m) < \eps$ for all $t\geq 0$.
  For any $x \in B$, define
  \begin{equation*}
    t(x;\eps) := \inf \{t \geq 0: S_t(x) \in \mathcal{V}_\eps\}.
  \end{equation*}
  Observe that $t(x;\eps)$ is finite, since when $x \in B$, $S_T (x) \in \U$, and so the trajectory $S_t(x)$ converges to one of the stationary points.
  We claim that $t(x;\eps)$ is upper semicontinuous (as a function of $x$) on $\bar B$.
  By definition of $t(x;\eps)$, for any $\eta >0$, there exists $\eta' \in [0,\eta)$ so that $S_{t(x;\eps)+\eta'}(x) \in \mathcal{V}_\eps$.
  Moreover, since $\mathcal{V}_\eps$ is open and $S_{t(x;\eps)+\eta'}$ is continuous, there exists some $\delta >0$ small enough that $S_{t(x;\eps)+\eta'}(B_\delta(x)) \subset V_\eps$.
  Thus, for any $y \in B_\delta(x)$,
  \begin{equation*}
    t(y;\eps) \leq t(x;\eps) + \eta' \leq t(x;\eps) + \eta,
  \end{equation*}
  and so $t(x;\eps)$ is upper semicontinuous.
  Therefore, $t(x;\eps)$ attains a maximum $T(\eps)$ on the compact set $\bar B$, hence
  \begin{equation*}
    d(S_t(x),\m) < \eps \text{ whenever } t \geq T(\eps),
  \end{equation*}
  which completes the proof.
\end{proof}

Finally, by Lemma~\ref{lem: bound on distance gives bound on haus distance}, uniform and asymptotic stability for gradient descent imply the analogous stability properties in Hausdorff distance for the dynamics on curves.

\begin{theorem}\label{thm: uniform and asymptotic stabilit of mep}
  $\m$ is uniformly stable in the Hausdorff distance under the gradient descent dynamics on curves. Moreover, $\m$ is asymptotically stable with uniform convergence on $B$ for some compact set $B$ containing an open neighborhood of $\m$.
\end{theorem}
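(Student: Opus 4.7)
The plan is to combine the three preceding lemmas into a single synthesis. The key structural observation is that for any $\phi \in \curves(m_1, m_k)$ and any $t \geq 0$, the evolved set $S_t \phi = \{S_t(x) : x \in \phi\}$ is again a curve in $\curves(m_1, m_k)$: the flow $S_t$ is a homeomorphism of $\Real^d$ and it fixes the endpoints $m_1$ and $m_k$, since these are critical points of $V$. Moreover, the one-sided distance from a set to $\m$ factors pointwise, $d(S_t \phi, \m) = \sup_{x \in \phi} d(S_t(x), \m)$, so pointwise bounds along $\phi$ translate immediately into bounds on $d(S_t \phi, \m)$.

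For uniform stability, I would first fix $\eps > 0$ and apply Lemma~\ref{lem: bound on distance gives bound on haus distance} to produce $\eta > 0$ so that any $\psi \in \curves(m_1, m_k)$ with $d(\psi, \m) < \eta$ also satisfies $\hd(\psi, \m) < \eps$. Next, invoking Lemma~\ref{cor: uniform stability} with tolerance $\eta/2$ yields $\delta > 0$ such that $d(x, \m) \leq \delta$ implies $d(S_t(x), \m) \leq \eta/2$ for every $t \geq 0$. For $\phi \in \curves(m_1, m_k)$ with $\hd(\phi, \m) \leq \delta$, every point of $\phi$ lies within one-sided distance $\delta$ of $\m$, so the pointwise estimate propagates to $d(S_t \phi, \m) \leq \eta/2 < \eta$ for all $t \geq 0$. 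Applying Lemma~\ref{lem: bound on distance gives bound on haus distance} to the curve $S_t \phi$ then delivers $\hd(S_t \phi, \m) < \eps$.

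For asymptotic stability with uniform convergence, the strategy is parallel. Lemma~\ref{cor: asymptotic stability of gradient descent} supplies an open set $B_0 \supset \m$ and a map $T_0 : (0, \infty) \to (0, \infty)$ such that $x \in B_0$ and $t \geq T_0(\eta)$ imply $d(S_t(x), \m) \leq \eta$. I would fix any compact $B \subset B_0$ that contains an open neighborhood of $\m$; such a $B$ exists because $\m$ is compact and $B_0$ is open, so one can take, e.g., a closed tubular thickening of $\m$ of sufficiently small radius. For $\phi \in \curves(m_1, m_k, B)$ each point of $\phi$ lies in $B_0$, so $d(S_t \phi, \m) \leq \eta$ for all $t \geq T_0(\eta)$, with no dependence on $\phi$. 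Given $\eps > 0$, choosing $\eta(\eps)$ from Lemma~\ref{lem: bound on distance gives bound on haus distance} and setting $T(\eps) := T_0(\eta(\eps))$ gives $\hd(S_t \phi, \m) < \eps$ for every $t \geq T(\eps)$ and every $\phi \in \curves(m_1, m_k, B)$.

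The argument is essentially bookkeeping once the preceding lemmas are in hand; no new estimates are required. The only point needing care is the verification that $S_t \phi$ remains in $\curves(m_1, m_k)$, which is the hypothesis of Lemma~\ref{lem: bound on distance gives bound on haus distance} and is used in both parts. This is immediate from the fact that $m_1$ and $m_k$ are fixed points of the gradient flow and $S_t$ is continuous, so the expected main obstacle is in fact mild.
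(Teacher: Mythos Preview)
Your proposal is correct and follows essentially the same route as the paper: both arguments use Lemma~\ref{lem: bound on distance gives bound on haus distance} to upgrade one-sided distance bounds to Hausdorff bounds, feeding in the pointwise one-sided stability results of Lemmas~\ref{cor: uniform stability} and~\ref{cor: asymptotic stability of gradient descent}. You are slightly more explicit than the paper about verifying that $S_t\phi \in \curves(m_1,m_k)$ and about handling strict versus non-strict inequalities (via the $\eta/2$ buffer), but the structure is identical.
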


\begin{proof}
  We prove only uniform stability; the proof of asymptotic stability is similar. Let $\eps >0$. By Lemma~\ref{lem: bound on distance gives bound on haus distance}, there exists $\eta >0$ so that $d(\phi, \m) < \eta$ implies $\hd(\phi, \m) < \eps$. Moreover, by Lemma~\ref{cor: uniform stability}, there exists $\delta >0$ so that $d(\phi, \m) < \delta$ implies $d(S_t(\phi), \m) < \eta$ for all $t \geq 0$. Therefore, if  $\phi \in \curves(m_1, m_k)$ and $\hd(\phi,\m) < \delta$, we have $\hd(S_t(\phi),\m) < \eps$ for all $t\geq 0$. Thus, $\m$ is uniformly stable in Hausdorff distance under GDDC.
\end{proof}

Our stability results guarantee the existence of a Lyapunov function.
We begin by constructing an appropriate domain, $B$,  for this Lyapunov function. The key property of this domain is forward invariance, i.e.\@ that $x \in B$ implies $S_t(x) \in B$ for all $t \geq 0$.

\begin{lemma}\label{lem: domain of lyapunov function}
  For some $\delta >0$ there exists a compact, forward invariant set $B$ containing an open neighborhood of $\m$.
\end{lemma}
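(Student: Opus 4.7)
The plan is to construct $B$ as (essentially) the forward orbit of a small tubular neighborhood of $\m$, using uniform stability (Lemma~\ref{cor: uniform stability}) to ensure the orbit remains bounded.

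First, fix any convenient $\eps>0$ (e.g.\@ $\eps = 1$). By Lemma~\ref{cor: uniform stability}, there exists $\delta>0$ so that $d(x,\m)\le\delta$ implies $d(S_t(x),\m)\le\eps$ for all $t\ge 0$. Let
\begin{equation*}
  U := \{x\in\Real^d : d(x,\m) < \delta\},
\end{equation*}
which is an open neighborhood of $\m$. Then define
\begin{equation*}
  B := \overline{\bigcup_{t\ge 0} S_t(U)}.
\end{equation*}

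It remains to verify three properties of $B$. \emph{Containment of a neighborhood of $\m$:} Since $S_0 = \mathrm{id}$, the set $U \subset B$, and $U$ is an open set containing $\m$. \emph{Compactness:} Every point in the orbit $\bigcup_{t\ge 0} S_t(U)$ lies in the closed $\eps$-tube $\{x : d(x,\m)\le \eps\}$ by the choice of $\delta$, so $B$ is contained in this closed tube as well. Since $\m$ is compact, the $\eps$-tube is bounded, hence $B$ is a closed bounded subset of $\Real^d$ and so compact. \emph{Forward invariance:} The union $\bigcup_{t\ge 0} S_t(U)$ is forward invariant by construction, and I would promote this to the closure by using continuity of the flow $S_t$: if $x = \lim_n x_n$ with $x_n \in S_{t_n}(U)$, then for any $s\ge 0$, $S_s(x_n) \in S_{s+t_n}(U) \subset \bigcup_{r\ge 0} S_r(U)$, and $S_s(x_n)\to S_s(x)$ by continuity of $S_s$, whence $S_s(x)\in B$.

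There is no real obstacle here; the only subtle point is checking that taking the closure does not destroy forward invariance, and this is immediate from continuity of $S_s$ on $\Real^d$ (which in turn follows from the global Lipschitz property of $\grad V$ in Assumption~\ref{asm: assumptions on V}). Boundedness of the orbit is exactly what uniform stability buys us, and is the reason the $\delta$-tube—rather than some larger neighborhood—must be used to seed the construction.
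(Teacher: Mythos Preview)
Your proof is correct and is essentially identical to the paper's: both take the closure of the forward orbit under $S_t$ of the $\delta$-tube furnished by Lemma~\ref{cor: uniform stability}, check that it contains the open $\delta$-tube, is contained in the bounded $\eps$-tube, and is forward invariant by continuity of the flow. The only cosmetic difference is that the paper verifies the last point via the explicit Lipschitz estimate $|S_t x - S_t y|\le e^{Lt}|x-y|$, whereas you invoke continuity of $S_s$ directly.
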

\begin{proof}
  By Lemma~\ref{cor: uniform stability}, for any $\eps >0$ there exists a $\delta >0$ so that $d(x, \m) < \delta$ implies $d(S_t(x), \m) < \eps$ for all $t\geq 0$.
  Define
  \begin{equation*}
    \NN_\delta(\m) := \cup_{p \in \m} B_\delta (p),
  \end{equation*}
  and
  \begin{equation*}
    B := \overline{\cup_{\tau \geq 0} S_\tau(\NN_\delta(\m))}.
  \end{equation*}
  We have $B \supset \NN_\delta(\m)$.
  Moreover,
  \begin{equation*}
    B \subset \overline{\NN_\eps(\m)},
  \end{equation*}
  so $B$ is bounded.

  To see that $B$ is forward invariant, let $x \in B$.
  We must show that for any $t\geq 0$, $S_t x$ is a limit point of $\cup_{\tau \geq 0} S_\tau(\NN_\delta(\m))$.
  We first observe that by definition, for any $\eps >0$, there is some $x(\eps) \in \cup_{\tau \geq 0} S_\tau(\NN_\delta(\m))$ with
  \begin{equation*}
    \lvert x - x(\eps) \rvert < \eps.
  \end{equation*}
  Therefore, for any $\eps >0$,
  \begin{equation*}
    \lvert S_t x - S_t x(\eps \exp(-tL)) \rvert
    \leq \exp(tL) \lvert x - x(\eps \exp(-tL)) \rvert
    \leq \eps.
  \end{equation*}
  Thus, since $S_t x(\eps \exp(-tL)) \in \cup_{\tau \geq 0} S_\tau(\NN_\delta(\m))$, $S_t x$ is a limit point, as desired.
\end{proof}

We now construct a Lyapunov function for $\m$ in Hausdorff distance.

\begin{theorem}\label{thm: existence of Lyapunov function}
  Let $B$ be a compact, forward invariant set containing an open neighborhood of $\m$. Assume that $\m$ is asymptotically stable with uniform convergence on $B$. There exists a Lyapunov function $V: \curves(m_1,m_k,B) \rightarrow [0,\infty)$ for $\m$.
\end{theorem}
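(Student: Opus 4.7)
The plan is to adapt the classical converse Lyapunov construction (cf.\@ \cite[Theorem~2.7.6]{StuartHumphries:DynamicalSysNumerAn1996}) to the dynamics on curves in Hausdorff distance. The dynamical input is already in place: Theorem~\ref{thm: uniform and asymptotic stabilit of mep} provides uniform stability and asymptotic stability with uniform convergence, and Lemma~\ref{lem: domain of lyapunov function} supplies the forward-invariant domain $B$. First I would combine these two properties into a single $\mathcal{KL}$-type estimate
\begin{equation*}
  \hd(S_t \phi, \m) \leq \beta(\hd(\phi, \m), t), \qquad \phi \in \curves(m_1, m_k, B),
\end{equation*}
with $\beta$ nondecreasing in its first argument and decreasing to zero in its second, and then invoke a Sontag-type $\mathcal{KL}$-lemma to produce smooth $\mathcal{K}_\infty$ functions $\alpha_1, \alpha_2$ satisfying
\begin{equation*}
  \alpha_1(\hd(S_t \phi, \m)) \leq \alpha_2(\hd(\phi, \m))\, e^{-t}.
\end{equation*}

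Second, I would form the sup-type candidate
\begin{equation*}
  \widetilde W(\phi) := \sup_{t \geq 0} \alpha_1(\hd(S_t \phi, \m))\, e^{t/2}.
\end{equation*}
A standard time-shift calculation gives the contraction $\widetilde W(S_s \phi) \leq e^{-s/2} \widetilde W(\phi)$, the exponential Sontag bound gives $\widetilde W(\phi) \leq \alpha_2(\hd(\phi, \m))$, and taking $t=0$ in the supremum gives $\widetilde W(\phi) \geq \alpha_1(\hd(\phi, \m))$. Thus $\widetilde W$ already satisfies the contraction property~\eqref{eq: contraction property of Lyapunov function} and is comparable to $\hd(\,\cdot\,, \m)$ through $\mathcal{K}_\infty$ functions, which after a linear rescaling (using boundedness of $\hd(\,\cdot\,,\m)$ on $B$ to pass from $\alpha_2$ to a linear upper bound) produces the equivalence~\eqref{eq: equivalence with distance of Lyapunov function} with an appropriately defined $\alpha$.

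The remaining step---and this is the main obstacle---is promoting $\widetilde W$ to a function $W$ satisfying the $1$-Lipschitz bound~\eqref{eq: Lipschitz property of Lyapunov function}. The sup formula for $\widetilde W$ is not $1$-Lipschitz because the flow $S_t$ has Lipschitz constant $e^{Lt}$ under Assumption~\ref{asm: assumptions on V}, so the Lipschitz constants of the individual terms of the supremum grow without bound in $t$. To overcome this I would truncate to a finite window, $\widetilde W_T(\phi) := \sup_{t \in [0, T]} \alpha_1(\hd(S_t\phi, \m))\, e^{t/2}$, which is Lipschitz with a finite constant $C_T$. The contraction property, clean for the full supremum, becomes an estimate comparing $\widetilde W_T(S_s\phi)$ to $\widetilde W_T(\phi)$ that requires the uniform-convergence rate from Theorem~\ref{thm: uniform and asymptotic stabilit of mep} together with the Sontag bound to control the contribution of times beyond $T$ after the shift. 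A careful balance between $T$, the exponential weight in the definition, and the Lipschitz constant $C_T$ then yields $W := \widetilde W_T / C_T$ satisfying all three Lyapunov requirements. We take $\mathscr{W} := \curves(m_1, m_k, B)$, which is forward invariant by Lemma~\ref{lem: domain of lyapunov function}. The essential technical difficulty is the simultaneous control of contraction and $1$-Lipschitz continuity: the $e^{Lt}$ expansion of the flow opposes the exponential contraction weights, and the two must be reconciled through the Sontag estimate and a carefully tuned truncation.
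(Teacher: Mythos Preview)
Your overall strategy---a sup-type converse Lyapunov construction---is the right one, and you have correctly identified the main obstacle: the flow expands with Lipschitz constant $e^{Lt}$, so the untruncated supremum is not Lipschitz in $\hd$. The gap is in your proposed fix. A single truncation $\widetilde W_T(\phi)=\sup_{t\in[0,T]}\alpha_1(\hd(S_t\phi,\m))\,e^{t/2}$ destroys the exact contraction required by~\eqref{eq: contraction property of Lyapunov function}. Indeed,
\[
\widetilde W_T(S_s\phi)=e^{-s/2}\sup_{u\in[s,T+s]}\alpha_1(\hd(S_u\phi,\m))\,e^{u/2},
\]
and the window $[s,T+s]$ contains times in $(T,T+s]$ not seen by $\widetilde W_T(\phi)$. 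Your Sontag bound controls those contributions only by $\alpha_2(\hd(\phi,\m))\,e^{-T/2}$, which is an \emph{additive} term: you obtain $\widetilde W_T(S_s\phi)\le e^{-s/2}\widetilde W_T(\phi)+e^{-s/2}\alpha_2(\hd(\phi,\m))\,e^{-T/2}$, not $\widetilde W_T(S_s\phi)\le e^{-cs}\widetilde W_T(\phi)$. Since $\alpha_2\circ\alpha_1^{-1}$ is in general nonlinear, you cannot absorb the extra term into a multiplicative factor, and no single choice of $T$ (followed by division by $C_T$) yields a function satisfying~\eqref{eq: contraction property of Lyapunov function} exactly. The ``careful balance'' you allude to is not actually achievable with one window.

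The paper resolves this with a device you are missing: rather than one truncation, it uses a \emph{family} of dead-zone functions $G_k(x)=\max\{0,x-1/k\}$ and sets $V_k(\phi)=\sup_{\tau\ge0}e^{c\tau}G_k(\hd(S_\tau\phi,\m))$. Because asymptotic stability with uniform convergence forces $\hd(S_\tau\phi,\m)\le 1/k$ for all $\tau\ge T(1/k)$, the supremand \emph{vanishes identically} beyond $T(1/k)$, so the full supremum equals the truncated one. This is the crucial point: each $V_k$ is simultaneously Lipschitz (with constant $L_k=e^{(L+c)T(1/k)}$, since the sup is effectively finite) \emph{and} satisfies the exact contraction~\eqref{eq: contraction property of Lyapunov function} (since it is the full sup). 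The price is that each $V_k$ has a dead zone and hence no lower bound by $\hd(\cdot,\m)$ near $\m$; the weighted sum $V=\sum_k 2^{-k}L_k^{-1}V_k$ restores the lower bound in~\eqref{eq: equivalence with distance of Lyapunov function} while keeping Lipschitz constant $1$. Replacing your single $T$ by this Yoshizawa--Massera family of cutoffs is the missing idea.
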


\begin{proof}
  We adapt the proof of~\cite[Theorem~1.7.6]{StuartHumphries:DynamicalSysNumerAn1996}. Details appear in Appendix~\ref{apx: proof of existence of lyapunov function}.
\end{proof}

\begin{remark}
  \label{rem: comparison with cameron article}
  Previous work has analyzed convergence of trajectories of GDDC to $\mep$s~\cite{cameron_string_2011}. It is known that if the limit set of a trajectory is a $\mep$, then the trajectory converges to that $\mep$~\cite[Theorem~3]{cameron_string_2011}. This is the case if there are finitely many critical points of the potential $V$ and all are minima or saddles of index one~\cite[Corollary~4]{cameron_string_2011}, or if both the potential and the initial curve are piecewise analytic~\cite[Corollary~7]{cameron_string_2011}. However, we are not aware of any results other than our Theorem~\ref{thm: uniform and asymptotic stabilit of mep} regarding the local asymptotic and uniform stability of individual $\mep$s. It is these local stability results which imply that a given MEP can be computed using a discretization of GDDC.
\end{remark}

\section{Convergence of the String Method}

Our main result in this section is Theorem~\ref{thm: convergence}, which implies that any $\mep$ $\m$ passing alternately through minima and saddles of index one may be approximated to arbitrary accuracy by the string method. The proof uses the existence of a Lyapunov function to show that discretization errors in the string method do not accumulate. Thus, the string method follows the GDDC, converging to a neighborhood of $\m$ over long times.

For convenience, we analyze only the simplified and improved string method with the linear interpolant.
\begin{assumption}
  Assume that $\I$ is the linear interpolant defined in~\eqref{eq: linear interpolant}.
\end{assumption}

We also place consistency and stability assumptions on the numerical integrator:

\begin{assumption}
  \label{asm: truncation error}
  For some $q \geq 2$ and $D >0$, we have
  \begin{equation}
    \label{eq: truncation error of integrator}
    \lvert S_{\Delta t} (x) - \bar S_{\Delta t}(x) \rvert \leq D \Delta t^q
  \end{equation}
  for all $x \in \Real^d$.
  In addition,
  \begin{equation}
    \label{eq: lipschitz property of integrator}
    \lvert \bar S_{\Delta t}^n (x) - \bar S_{\Delta t}^n (y) \rvert \leq \exp(Ln\Delta t) \lvert x - y\rvert.
  \end{equation}
\end{assumption}

We note that all commonly used integrators satisfy~\eqref{eq: truncation error of integrator} with $q \geq 2$, and many integrators satisfy~\eqref{eq: lipschitz property of integrator}; see~\cite{StuartHumphries:DynamicalSysNumerAn1996}. In particular, for Euler's method,
\begin{align*}
  \lvert \bar S_{\Delta t} (x) - \bar S_{\Delta t}( y) \rvert &= x-y - \Delta t(-\nabla V(x) - \nabla V(y)) \leq (1 + \Delta t L) \lvert x - y\rvert.
\end{align*}
Therefore, since $1 + \Delta t L \leq \exp(\Delta t L)$,
\begin{equation*}
  \lvert \bar S_{\Delta t}^n( x) - \bar S_{\Delta t}^n (y) \rvert \leq (1 + \Delta t L)^n \lvert x - y\rvert \leq  \exp(Ln\Delta t) \lvert x - y\rvert.
\end{equation*}

We now estimate the error introduced when reparametrizing the string.
\begin{lemma}
  \label{lem: interp error}
  Let $x \in \strings^{N+1}$.
  We have
  \begin{displaymath}
    \hd(\I x, \I \R x) \leq \frac{m(x)}{2}.
  \end{displaymath}
\end{lemma}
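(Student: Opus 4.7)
The plan is to exploit a structural feature of the reparametrization. By its very definition $\R(x)_i = \I(\lstar(x),x)(i/N)$, each reparametrized image lies on the polygonal curve $\I x$ itself, being the point at normalized arclength fraction $i/N$ along the linear interpolant of $x$. Consequently, consecutive images $\R(x)_i$ and $\R(x)_{i+1}$ are separated by exactly $\l(x)_N/N$ units of arc length along $\I x$, and since the maximum jump dominates the average we have $\l(x)_N/N \leq m(x)$. The polygonal curve $\I\R x$ is the union of chords connecting these consecutive images, while the sub-arcs of $\I x$ between the same endpoints partition $\I x$. The problem thus reduces to a purely local comparison: how far can a chord deviate from the sub-arc of $\I x$ joining its endpoints?

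I would isolate this as a geometric sublemma: if $\gamma\colon[0,L]\to\Real^d$ is parametrized by arclength with endpoints $p=\gamma(0)$ and $p'=\gamma(L)$, then $\hd(\gamma([0,L]), [p,p'])\leq L/2$. The arc-to-chord direction is immediate from $|\gamma(s)-p|\leq s$ and $|\gamma(s)-p'|\leq L-s$, which forces $\gamma(s)$ to lie within $L/2$ of one chord endpoint, hence within $L/2$ of the chord itself. For the chord-to-arc direction I would use an intermediate value argument on the signed projection $f(s) := \<\gamma(s)-p,\,(p'-p)/|p'-p|\>$, which is continuous with $f(0)=0$ and $f(L)=|p'-p|$: given any $r$ on the chord, the IVT produces an $s$ making $r$ the foot of the perpendicular from $\gamma(s)$, and then Pythagoras yields $|r-\gamma(s)|^2 \leq |\gamma(s)-p|^2 - f(s)^2 \leq s^2$ and analogously $\leq (L-s)^2$, so $|r-\gamma(s)|\leq L/2$.

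Applied with $L=\l(x)_N/N$ and $\gamma$ the sub-arc of $\I x$ joining consecutive reparametrized images, the sublemma yields a bound of $\l(x)_N/(2N)\leq m(x)/2$ between each chord of $\I\R x$ and the matching sub-arc of $\I x$. Since both curves decompose into these corresponding pieces sharing endpoints, the piecewise bound aggregates to a bound on the Hausdorff distance of the full unions: any point in $\I x$ lies in some sub-arc, which is within $m(x)/2$ of the corresponding chord in $\I\R x$, and vice versa. The only step that requires any care is the intermediate value argument in the sublemma; I do not anticipate a substantive obstacle, and the rest is bookkeeping from the definitions.
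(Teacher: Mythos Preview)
Your proof is correct and rests on the same decomposition as the paper: partition $\I x$ into sub-arcs between consecutive reparametrized images $\R x_i$ and $\R x_{i+1}$, each of arc length $\l(x)_N/N \leq m(x)$, and compare each sub-arc to the corresponding chord of $\I\R x$. For the arc-to-chord direction the paper argues exactly as you do. For the chord-to-arc direction, however, your IVT/projection sublemma does more work than necessary: the chord endpoints $\R x_i$ and $\R x_{i+1}$ already lie on $\I x$ by the very definition of $\R$, so any point $y$ on the chord satisfies $\min\{|y-\R x_i|,|y-\R x_{i+1}|\}\leq \tfrac12|\R x_{i+1}-\R x_i|\leq \tfrac12 m(x)$, which immediately bounds $d(\I\R x,\I x)$. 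The paper takes precisely this shortcut, handling both one-sided distances via proximity to the shared vertices $\R x_i$ and never invoking an intermediate-value argument. Your sublemma packages the geometry more cleanly as a standalone statement about chords and arcs, but the paper's route is shorter.
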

\begin{proof}
  First, we observe that for any $x \in \strings^{N+1}$, we have $m(Rx) \leq m(x)$. To see this, let $s(\I x, a, b)$ denote the arc-length along $\I x$ between $a$ and $b$. We have
  \begin{equation*}
    |Rx_i - Rx_{i+1}| \leq s(\I x, \R x_i, \R x_{i+1}) = \frac{\l(x)}{N}
    = \frac{\sum_{i=0}^{k-1} \lvert x_{i+1} - x_i \rvert}{N} \leq m(x).
  \end{equation*}
  Now let $y \in \I \R x$, and suppose that $y$ is between $\R x_i$ and $\R x_{i+1}$. We have
  \begin{displaymath}
    \min \{|\R x_i - y|, |\R x_{i+1} - y|\} \leq \frac{1}{2} |\R x_i - \R x_{i+1}|
    \leq \frac{m(Rx)}{2} \leq \frac{m(x)}{2}.
  \end{displaymath}
  Therefore, since $Rx_i \in Ix$ and $Rx_{i+1} \in Ix$, $\dist (\I \R x, \I x) \leq \frac{m(x)}{2}$.
  Now let  $y \in \I x$, and suppose that $y = I(\lstar(x), x)(\alpha)$ for some $\alpha \in \left [ \frac{i}{N}, \frac{i+1}{N} \right ]$. We have
  \begin{displaymath}
    \min \{|\R x_i - y|, |\R x_{i+1} - y|\} \leq \frac{1}{2} s(\I x, \R x_i, \R x_{i+1}) \leq \frac{m(x)}{2},
  \end{displaymath}
  so $\dist (\I x, \I \R x) \leq \frac{Kh}{2}$.
  We conclude that $\hd(\I x, \I \R x) \leq \frac{m(x)}{2}$.
\end{proof}

We now show that reparametrization occurs only after each image has been advanced at least by a certain time $\tmin>0$. This is crucial for the numerical analysis, since the upper bound on reparametrization error in Lemma~\ref{lem: interp error} does not have a factor involving the time step $\Delta t$. This upper bound suffices because we do not consider whether the string method approximates GDDC in the limit as $\Delta t \rightarrow 0$. Instead, we are interested only in the convergence of strings to $\m$ over long times.

\begin{lemma}\label{lem: bound on interp time}
  \ignore{  Recall the definitions of $h$, $K$, and $m$ from Section~\ref{sec: algorithm},
    and let $L$ be a Lipschitz constant for $\grad V$.}
  Let $x \in \strings^{N+1}$.
  Define
  \begin{equation*}
    \tmin := \frac{\log(K)}{L}
  \end{equation*}
  We have
  \begin{equation*}
    m(\bar S^n_{\Delta t}(x) )\leq K m(x) \text{ for all }  0 \leq n \leq \left \lfloor \frac{\tmin}{\Delta t} \right \rfloor .
  \end{equation*}
\end{lemma}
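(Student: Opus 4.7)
The plan is to show this by a direct application of the Lipschitz-type stability bound on the integrator from Assumption~\ref{asm: truncation error}, equation~\eqref{eq: lipschitz property of integrator}. The string spacing $m(x)$ is a maximum of pairwise Euclidean distances between consecutive images, and $\bar{S}_{\Delta t}^n$ acts pointwise on each image, so controlling $m(\bar S_{\Delta t}^n(x))$ reduces to controlling how $\bar{S}_{\Delta t}^n$ stretches distances between pairs of points.

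First, I would fix an index $i \in \{1, \dots, N\}$ and apply~\eqref{eq: lipschitz property of integrator} with $x = x_{i-1}$ and $y = x_i$ to obtain
\begin{equation*}
  \bigl\lvert \bar{S}_{\Delta t}^n(x_i) - \bar{S}_{\Delta t}^n(x_{i-1}) \bigr\rvert \leq \exp(L n \Delta t) \, \lvert x_i - x_{i-1} \rvert \leq \exp(L n \Delta t) \, m(x).
\end{equation*}
Taking the maximum over $i = 1, \dots, N$, we get
\begin{equation*}
  m\bigl( \bar S_{\Delta t}^n(x) \bigr) \leq \exp(L n \Delta t) \, m(x).
\end{equation*}

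Next, I use the constraint on $n$. For $0 \leq n \leq \lfloor \tmin / \Delta t \rfloor$, we have $n \Delta t \leq \tmin = \log(K)/L$, hence $\exp(L n \Delta t) \leq \exp(\log K) = K$. Combining this with the previous display yields $m(\bar S_{\Delta t}^n(x)) \leq K\, m(x)$, as claimed.

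There is no real obstacle here: the entire argument is a one-line application of~\eqref{eq: lipschitz property of integrator} followed by the definition of $\tmin$. The only subtle point is that the estimate uses the cumulative Lipschitz constant $\exp(Ln\Delta t)$ for $n$ steps of the integrator rather than the single-step constant $\exp(L \Delta t)$; this is exactly what Assumption~\ref{asm: truncation error} provides, and what forces the choice $\tmin = \log(K)/L$.
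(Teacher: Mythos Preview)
Your proof is correct and follows essentially the same approach as the paper's: apply~\eqref{eq: lipschitz property of integrator} to consecutive images, bound by $m(x)$, and use $n\Delta t \leq \tmin$ to get $\exp(Ln\Delta t)\leq K$. The paper compresses this into a single chain of inequalities, but the content is identical.
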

\begin{proof}
  By~\eqref{eq: lipschitz property of integrator}, we have
  \begin{equation*}
    \lvert \bar S^n_{\Delta t} (x_{i+1}) -\bar S^n_{\Delta t}(x_{i}) \rvert \leq \exp(L n \Delta t)  \lvert x_{i+1} - x_{i} \rvert \leq \exp(L \tmin)m(x) \leq  K m(x)
  \end{equation*}
  for all $i = 1, \dots, k$.
\end{proof}

Finally, we estimate the error resulting from evolving only the images $x_i$ by gradient descent in each step, not the entire interpolated curve $Ix$.

\begin{lemma}\label{lem: error in commuting evolution and interpolation}
  Let $x \in \strings^{N+1}$ and let $\tmin$ be defined as in Lemma~\ref{lem: bound on interp time}. There exists a constant $C>0$ depending only on $V$ and $\tmin$ so that
  \begin{equation*}
    \hd(S_{\Delta t} I x, I S_{\Delta t} x ) \leq C m(x)^2  \Delta t \text{ for all } 0 \leq \Delta t \leq \tau.
  \end{equation*}
\end{lemma}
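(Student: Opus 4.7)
The plan is to reduce the Hausdorff bound to a uniform pointwise comparison on each linear piece of $\I x$. Fix a segment index $i$, parametrize the $i$-th segment by $y_s := (1-s)x_i + s x_{i+1}$ for $s\in[0,1]$, and define the two candidate curves
\begin{equation*}
  \phi_s(t) := S_t(y_s), \qquad \psi_s(t) := (1-s)S_t(x_i) + s\, S_t(x_{i+1}).
\end{equation*}
As sets in $\Real^d$, the $i$-th piece of $S_{\Delta t}\I x$ is $\{\phi_s(\Delta t):s\in[0,1]\}$ and the $i$-th piece of $\I S_{\Delta t} x$ is the line segment $\{\psi_s(\Delta t):s\in[0,1]\}$; both share the endpoints $S_{\Delta t} x_i$ and $S_{\Delta t} x_{i+1}$. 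Since the Hausdorff distance of a union is bounded by the maximum of the pieces' Hausdorff distances, and since two curves sharing endpoints and parametrized on the same interval have Hausdorff distance at most the pointwise supremum $\sup_s|\phi_s(\Delta t)-\psi_s(\Delta t)|$, it suffices to bound this supremum uniformly in $i$ and $s$ by $C m(x)^2 \Delta t$.

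Next, set $e_s(t) := \phi_s(t) - \psi_s(t)$, so $e_s(0)=0$. Differentiating and inserting $\pm \grad V(\psi_s(t))$ gives
\begin{equation*}
  \dot e_s = -\bigl[\grad V(\phi_s) - \grad V(\psi_s)\bigr] - \bigl[(1-s)\grad V(S_t x_i) + s\,\grad V(S_t x_{i+1}) - \grad V(\psi_s)\bigr].
\end{equation*}
The first bracket is bounded by $L|e_s|$ by Assumption~\ref{asm: assumptions on V}. The second bracket is the linear-interpolation error for $\grad V$ at the two flowed endpoints, evaluated at their convex combination $\psi_s$; since $V\in C^3$ with bounded third derivatives, a second-order Taylor expansion bounds this bracket by $C'\lvert S_t x_{i+1} - S_t x_i\rvert^2$ for a constant $C'$ depending only on $C_3$ and $d$.

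To make this bound uniform in $t\in[0,\Delta t]$, I invoke Lemma~\ref{lem: bound on interp time}: for $\Delta t\leq \tmin$, one has $\lvert S_t x_{i+1} - S_t x_i\rvert \leq K m(x)$ on $[0,\Delta t]$ (applied here in the continuous-time form, which follows directly from Gr\"onwall on the flow), so the second bracket is at most $C' K^2 m(x)^2$. This yields the scalar differential inequality $\lvert \dot e_s\rvert \leq L\lvert e_s\rvert + C'K^2 m(x)^2$ with $e_s(0)=0$, and Gr\"onwall's inequality gives
\begin{equation*}
  \lvert e_s(t)\rvert \leq C'K^2 m(x)^2\,\frac{e^{Lt}-1}{L} \leq C m(x)^2\, t \quad\text{for }0\leq t\leq \tmin,
\end{equation*}
with $C$ depending only on $L$, $C_3$, $d$, and $\tmin$ (note $K=e^{L\tmin}$, so the dependence is really only on $V$ and $\tmin$). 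Taking $t=\Delta t$ and combining with the reduction above completes the proof.

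The main obstacle is ensuring that $C$ does not depend on $x$. The pointwise Taylor estimate produces a factor $\lvert S_t x_{i+1} - S_t x_i\rvert^2$ that must be replaced by something proportional to $m(x)^2$ uniformly over $t\in[0,\Delta t]$; this is exactly what Lemma~\ref{lem: bound on interp time} (applied pathwise to the flow rather than to the discrete integrator, with $K$ replaced by $e^{L\tmin}$) supplies, and the restriction $\Delta t\leq \tmin$ in the statement is used precisely here. Everything else is a routine Taylor-plus-Gr\"onwall computation, and the bookkeeping from pointwise to Hausdorff distance is immediate once both curves are viewed as sets parametrized by the same interval.
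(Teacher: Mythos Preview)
Your argument is correct and reaches the same conclusion as the paper, but by a genuinely different route. The paper Taylor-expands the flow map $x \mapsto S_{\Delta t}(x)$ in the spatial variable to second order, reducing the pointwise error to $\tfrac12\sup_z \lvert \partial_w^2 S_{\Delta t}(z)\rvert\, m(x)^2$; it then writes down the second variational equation for $\partial_w^2 S_t(x)$, uses $\partial_w^2 S_0(x)=0$, and applies Gr\"onwall to that equation to extract a factor linear in $\Delta t$. You instead differentiate the error $e_s$ in time and Taylor-expand $\grad V$ (rather than $S_t$) in space: this produces a scalar differential inequality $\lvert \dot e_s\rvert \leq L\lvert e_s\rvert + C'\lvert S_t x_{i+1}-S_t x_i\rvert^2$ with zero initial data, and after bounding the forcing by $C'K^2 m(x)^2$ via the flow Lipschitz estimate, a single Gr\"onwall step yields the $m(x)^2\Delta t$ bound. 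Your approach is a bit more elementary, since it avoids the second-variation machinery and the appeal to $\C^2$-smoothness of the flow in $x$; the paper's approach, on the other hand, isolates $\partial_w^2 S_t$ explicitly, which would be the natural object to bound if one wanted to treat higher-order interpolants. (A harmless sign slip: in your displayed formula for $\dot e_s$ the second bracket should carry a $+$ rather than a $-$; since you only use its magnitude, nothing changes.)
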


\begin{proof}
  Let $x \in \strings^{N+1}$. Fix $\alpha \in [0,1]$ and $i \in \{0, \dots, N\}$. Let
  \begin{equation*}
    \bar x = (1-\alpha)x_i + \alpha x_{i+1}.
  \end{equation*}
  Write
  \begin{equation*}
    y =x_{i+1}-x_i.
  \end{equation*}
  For any $z \in \Real^d$, define $\partial_z$ to be the derivative in direction $z$, so for example
  \begin{equation*}
    \partial_z S_t(x) = \left . \frac{d}{d\eps} \right \rvert_{\eps=0} S_t(x+\eps z).
  \end{equation*}

  Since $\grad V$ is twice continuously differentiable, the flow $S_t(x)$ is also twice continuously differentiable as a function of $(t,x)$ by~\cite[Theorem~1.3]{Chicone:ODEsApplications2006}.
  Therefore, by Taylor's theorem, for some $x'$ and $x''$ in the convex hull of $\{x_i, x_{i+1}\}$, we have
  \begin{align*}
    &S_{\Delta t}( x_i) - S_{\Delta t} (\bar x)  =-\alpha \partial_y S_{\Delta t}(\bar x) + \frac12 \alpha^2 \partial_y^2 S_{\Delta t} (x'),
  \end{align*}
  and
  \begin{align*}
    &S_{\Delta t}( x_{i+1}) - S_{\Delta t} (\bar x) =(1-\alpha) \partial_y S_{\Delta t}(\bar x) + \frac12 (1-\alpha)^2 \partial_y^2 S_{\Delta t} (x'').
  \end{align*}
  It follows that
  \begin{align}
    \lvert (1-\alpha) S_{\Delta t} x_i + \alpha S_{\Delta t} x_{i+1} - S_{\Delta t} (\bar x ) \rvert
    &\leq \frac12 \sup_{z \in \Real^d} \left  \lvert \partial_{y/ \lvert y \rvert}^2 S_{\Delta t} (z) \right \rvert \lvert y \rvert^2 \nonumber \\
    &\leq \frac12 \sup_{z \in \Real^d} \left  \lvert \partial_{w}^2 S_{\Delta t} (z) \right \rvert m(x)^2, \label{eq: taylor theorem upper bound on interpolation error}
  \end{align}
  where we define $w = y/\lvert y \rvert$.

  $S_t(x)$ solves the gradient descent initial value problem
  \begin{equation*}
    \begin{dcases}
      \frac{d}{dt}S_t(x) &= -\grad V(S_t(x)) \\
      S_0(x) &=x.
    \end{dcases}
  \end{equation*}
  Differentiating this initial value problem twice with respect to $x$ yields
  \ignore{ \begin{equation}\label{eq: ivp for second derivative of flow}
      \begin{dcases}
        \frac{d}{dt} \partial_w S_t(x)_q &= - \sum_{\ell=1}^d \frac{\partial^2 V}{\partial x_q \partial x_\ell}(S_t(x)) \partial_w S_t(x)_\ell \\
        \partial_w S_0(x) &= w \\
        \frac{d}{dt} \partial_w^2 S_t(x)_q
        &= -\sum_{\ell,m=1}^d \frac{\partial^3 V}{\partial x_q \partial x_\ell \partial x_m}(S_t(x)) \partial_w S_t(x)_\ell \partial_w S_t(x)_m\\
        &\phantom{=}- \sum_{\ell=1}^d \frac{\partial^2 V}{\partial x_q \partial x_\ell}(S_t(x)) \partial_w^2 S_t(x)_\ell \\
        \partial_w^2 S_0(x) &= 0.
      \end{dcases}
    \end{equation}}
  \begin{equation}\label{eq: ivp for second derivative of flow}
    \begin{dcases}
      \frac{d}{dt} \partial_w^2 S_t(x) &= \Gamma(t) + A(t) \partial_w^2 S_t(x) \\
      \partial_w^2 S_0(x) &= 0.
    \end{dcases}
  \end{equation}
  where
  \begin{equation*}
    \begin{aligned}
      \Gamma(t)_q  &:= -\sum_{\ell,m=1}^d \frac{\partial^3 V}{\partial x_q \partial x_\ell \partial x_m}(S_t(x)) \partial_w S_t(x)_\ell \partial_w S_t(x)_m &&\text{ for } q=1, \dots, d \text{ and } \\
      A(t)_{q\ell} &:=-  \frac{\partial^2 V}{\partial x_q \partial x_\ell}(S_t(x)) &&\text{ for } q,\ell = 1, \dots, d. \\
    \end{aligned}
  \end{equation*}
  (Here, $\partial_w S_t(x)_\ell$ and $\partial_w^2 S_t(x)_\ell$ denote the $\ell$'th coordinate components of  $\partial_w S_t(x)_\ell$ and  $\partial_w^2 S_t(x)_\ell$, respectively.)

  Since $\partial_w^2 S_0(x)=0$, we have $\lvert \partial_w^2 S_t(x) \rvert \leq C(x)t$ for any fixed $x \in \Real^d$. However, it is not immediately clear that this estimate holds with a constant independent of $x$.
  To prove this, observe that since $\grad V$ is Lipschitz with constant $L$, we have
  \begin{equation*}
    \sup_{t \in [0, \infty) }\lVert A(t) \rVert_2 \leq  \sup_{z \in \Real^d}  \left \lVert \frac{\partial^2 V}{\partial x_q \partial x_\ell}(z) \right \rVert_2 \leq L,
  \end{equation*}
  where $\lVert \cdot \rVert_2$ is the operator norm induced by Euclidean distance $\lvert \cdot \rvert$.
  Moreover, $S_t$ is Lipschitz with constant $\exp(Lt)$, so
  \begin{equation*}
    \partial_w S_t(x) \leq \exp(Lt) \lvert w \rvert =  \exp(Lt).
  \end{equation*}
  By Assumption~\ref{asm: assumptions on V},
  \begin{equation*}
    \left \lvert \frac{\partial^3 V}{\partial x_q \partial x_\ell \partial x_m}(S_t(x)) \right \rvert \leq C_3.
  \end{equation*}
  Thus,
  \begin{equation*}
    \Gamma(t) \leq d C_3 \exp(2L t).
  \end{equation*}
  It then follows from~\eqref{eq: ivp for second derivative of flow} that
  \begin{align*}
    \lVert \partial_w^2 S_t(x) \rVert &\leq \int_0^t \lVert \Gamma(s) \rVert \, ds + \int_0^t \lVert A(s) \rVert_2 \lVert \partial_w^2 S_s(x) \rVert \, ds \\
    &\leq \frac{dC_3}{2L}(\exp(2Lt) -1) + L \int_0^t \lVert \partial_w^2 S_s(x) \rVert \, ds.
  \end{align*}
  Therefore, by Gr\"onwall's inequality,
  \begin{align}
    \lVert \partial_w^2 S_t(x) \rVert \leq \frac{dC_3}{2L}(\exp(2Lt) -1) \exp(Lt).\label{eq: gronwalls}
  \end{align}

  Finally, by~\eqref{eq: taylor theorem upper bound on interpolation error} and~\eqref{eq: gronwalls}, we have
  \begin{align*}
    \lvert (1-\alpha) S_{\Delta t} x_i + \alpha S_{\Delta t} x_{i+1} - S_{\Delta t} (\bar x ) \rvert &\leq \frac{dC_3}{2L}(\exp(2L\Delta t) -1) \exp(L\tmin) m(x)^2 \\
                                                                                                     &\leq dC_2 \exp(3L\tmin) m(x)^2 \Delta t \\
    &\leq dC_2 K^3 m(x)^2 \Delta t
  \end{align*}
  for all  $\Delta t \leq \tmin$.
  This proves the result, since every point $y \in I S_{\Delta t} x$ takes the form
  \begin{equation*}
    y = (1-\alpha) S_{\Delta t} x_i + \alpha S_{\Delta t} x_{i+1}
  \end{equation*}
  for some $i \in \{0, \dots, N\}$ and $\alpha \in [0,1]$, and every point $z \in S_{\Delta t} Ix$ takes the form
  \begin{equation*}
    z = S_{\Delta t}((1-\beta)x_j + \beta x_{j+1}).
  \end{equation*}
  for some $j \in \{0, \dots, N\}$ and $\beta \in [0,1]$.
\end{proof}

We now prove our convergence result for the simplified and improved string method. We note that some aspects of the proof were inspired by a result concerning  the persistence of attractors of ODEs under discretization; see~\cite{kloeden-lorenz-1986} and~\cite[Theorem~7.5.1]{StuartHumphries:DynamicalSysNumerAn1996}.

\begin{theorem} \label{thm: convergence}
  There exist $h_0 >0$, $r_0 >0$, $N_0 >0$, and a function $e:(0,h_0) \times (0, \tau) \rightarrow (0, \infty)$ with
  \begin{equation*}
    \lim_{h, \Delta t \rightarrow 0^+} e(h, \Delta t) = 0
  \end{equation*}
  such that if $\hd(I x^0, \m) < r_0$ and $h < h_0$, then
  \begin{equation*}
    \hd(I x^n, \m) \leq e(h, \Delta t)  \text{ for all } n > N_0.
  \end{equation*}
\end{theorem}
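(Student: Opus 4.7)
The plan is to control the Lyapunov function $W$ from Theorem~\ref{thm: existence of Lyapunov function} along the iterates $\phi^n := I x^n$. Setting $y^n_i := \bar S_{\Delta t}(x^n_i)$, I would first combine Assumption~\ref{asm: truncation error}, the observation that $\hd(Iz, Iw) \leq \max_i |z_i - w_i|$ for linear interpolants of two strings of equal length, and Lemma~\ref{lem: error in commuting evolution and interpolation} to obtain the one-step bound
\[
  \hd(I y^n, S_{\Delta t} \phi^n) \;\leq\; D \Delta t^q + C m(x^n)^2 \Delta t \;\leq\; D\Delta t^q + C K^2 h^2 \Delta t,
\]
where the last inequality uses the invariant $m(x^n) \leq Kh$ maintained by the reparametrization rule. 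When a step triggers reparametrization, Lemma~\ref{lem: interp error} (extended in the obvious way to the case of added images) contributes an additional term $\hd(\phi^{n+1}, I y^n) \leq m(y^n)/2 \leq K^2 h/2$. A key point is that Lemma~\ref{lem: bound on interp time} forces any two reparametrization events to be separated by at least $\lfloor \tmin/\Delta t \rfloor$ steps.

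Combining the Lipschitz and contraction properties of $W$ then yields the recursion
\[
  W(\phi^{n+1}) \;\leq\; W(S_{\Delta t}\phi^n) + \hd(\phi^{n+1}, S_{\Delta t}\phi^n) \;\leq\; e^{-c\Delta t} W(\phi^n) + \delta_n,
\]
where $\delta_n = D\Delta t^q + CK^2 h^2 \Delta t$ on regular steps and $\delta_n = D\Delta t^q + CK^2 h^2 \Delta t + K^2 h/2$ on reparametrization steps. I would iterate this and split the resulting geometric sum into regular and reparametrization contributions. The regular part is controlled by $\tfrac{2}{c}(D\Delta t^{q-1} + CK^2 h^2)$, while the reparametrization part, using the minimum $\tmin$ gap, is controlled by $K^2 h/(2(1 - e^{-c\tmin}))$. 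Together,
\[
  W(\phi^n) \;\leq\; e^{-cn\Delta t} W(\phi^0) + \tfrac{2}{c}\bigl(D\Delta t^{q-1} + CK^2 h^2\bigr) + \tfrac{K^2 h}{2(1 - e^{-c\tmin})}.
\]
Choosing $N_0 = N_0(h,\Delta t)$ so that $e^{-cN_0 \Delta t} W(\phi^0) \leq h$ and defining $e(h,\Delta t)$ to be $\alpha^{-1}$ applied to the right-hand side (with $W(\phi^0)$ replaced by $h$), the lower bound in~\eqref{eq: equivalence with distance of Lyapunov function} gives $\hd(\phi^n, \m) \leq e(h,\Delta t) \to 0$ as $h, \Delta t \to 0^+$ for all $n > N_0$.

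For the recursion to apply, each $\phi^n$ must lie in the domain $\mathscr{W}$ of $W$, which I would verify by induction: I plan to pick $r_0$, $h_0$, and an upper bound on $\Delta t$ small enough that the right-hand side of the recursion never exceeds a value $w^\ast$ corresponding, via~\eqref{eq: equivalence with distance of Lyapunov function}, to a Hausdorff neighborhood of $\m$ contained in the forward-invariant set $B$ of Lemma~\ref{lem: domain of lyapunov function}. The hard part will be the amortization of the reparametrization error: unlike the truncation and commutation errors, it is $O(h)$ per occurrence rather than $O(\Delta t)$ per step, so without the combined force of the Lyapunov contraction and the $\tmin$ lower bound on the time between reparametrizations from Lemma~\ref{lem: bound on interp time}, it would dominate and destroy convergence.
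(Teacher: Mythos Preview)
Your proposal is essentially the paper's own argument: the same Lyapunov recursion $W(\phi^{n+1}) \leq e^{-c\Delta t}W(\phi^n) + \delta_n$, the same decomposition of $\delta_n$ into truncation, commutation, and reparametrization errors, the same use of the $\tmin$ gap from Lemma~\ref{lem: bound on interp time} to sum the reparametrization contributions geometrically, and the same inductive argument (carried out in the paper's Appendix~\ref{apx: existence of invariant nbhd for sm}) to keep the iterates inside the domain of $W$. Your constants differ slightly from the paper's (e.g.\ you bound $m(y^n) \leq K^2 h$ via $\Delta t \leq \tmin$ whereas the paper writes $\exp(L\Delta t)Kh$), but the structure and key ideas are identical.
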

\begin{proof}
  First, one must show that there exist $h_0 >0$, $r_0 >0$, and $\tmax >0$ so that if  $h <h_0$, $\Delta t \leq \tmax$, and $\hd(Ix^0, \m) \leq r_0$, then $Ix^\ell$ belongs to the domain $\mathscr{W}$ of $W$ for all $\ell \geq 0$. We prove this in Appendix~\ref{apx:  existence of invariant nbhd for sm}.

  Once it is established that the entire trajectory of the string method lies in the domain of $W$, the result follows from a variation of parameters formula involving $W$.
  By the contraction property~\eqref{eq: contraction property of Lyapunov function} and Lipschitz property~\eqref{eq: Lipschitz property of Lyapunov function} of the Lyapunov function, we have
  \begin{align}
    W(I x^{\ell+1}) &\leq W(S_{\Delta t} I x^\ell) + \lvert  W(Ix^{\ell+1}) - W(S_{\Delta t} I x^\ell) \rvert \nonumber \\
                    &\leq \exp(-c \Delta t) W(I x^\ell) + \hd(Ix^{\ell+1}, S_{\Delta t} I x^\ell) \nonumber \\
                    &\leq \exp(-c \Delta t) W(I x^\ell) +  \hd(Ix^{\ell+1}, I \bar S_{\Delta t} x^\ell) \\
    &\qquad \qquad + \hd(I \bar S_{\Delta t} x^\ell, I S_{\Delta t} x^\ell) +  \hd(I S_{\Delta t} x^\ell, S_{\Delta t} I x^\ell) \nonumber \\
                    &\leq  \exp(-c \Delta t) W(I x^\ell) + R_\ell + T_\ell + C_\ell  \label{eq: single step decrease in lyapunov function},
  \end{align}
  where we define
  \begin{align*}
    R_\ell &:= \hd(Ix^{\ell+1}, I S_{\Delta t} x^\ell), \\
    T_\ell &:=  \hd(I \bar S_{\Delta t} x^\ell, I S_{\Delta t} x^\ell), \text{ and } \\
    C_\ell &:= \hd(I S_{\Delta t} x^\ell, S_{\Delta t} I x^\ell).
  \end{align*}
  The term $R_\ell$ is associated with reparametrization, $T_{\ell}$ is the truncation error of the numerical integrator, and $C_\ell$ is associated with commuting evolution by $S_{\Delta t}$ and interpolation.

  By induction using~\eqref{eq: single step decrease in lyapunov function}, we have the variation of parameters formula
  \begin{align}\label{eq: variation of parameters formula}
    W(Ix^\ell) &\leq \exp(-c \ell \Delta t) W(Ix^0) + \sum_{j=0}^{\ell-1} \exp(-c(\ell-j-1) \Delta t) (R_j +T_j + C_j).
  \end{align}
  Assumption~\ref{asm: truncation error} implies
  \begin{equation*}
    T_\ell \leq D \Delta t^q,
  \end{equation*}
  since when $I$ is the linear interpolant
  \begin{equation*}
    \hd(Ix , Iy) \leq \max_{i=0, \dots, N+1} \lvert x_i - y_i \rvert
  \end{equation*}
  for any $x, y \in \strings^{N+1}$.
  Moreover, by Lemma~\ref{lem: error in commuting evolution and interpolation},
  \begin{equation*}
    C_\ell \leq C(Kh)^2 \Delta t.
  \end{equation*}
  Therefore,
  \begin{align}
    \sum_{j=0}^{\ell-1} \exp(-c(\ell-j-1) \Delta t) (T_j + C_j) &\leq  \frac{(D \Delta t^q+ C(Kh)^2) \Delta t (1-\exp(-c\ell \Delta t))}{1-\exp(-c\Delta t)}\nonumber \\
                                                           &\leq  \frac{(D \Delta t^{q-1} + C(Kh)^2) (1-\exp(-c\ell \Delta t))}{c} \nonumber \\
                                                           &\leq  \frac{D \Delta t^{q-1}+C(Kh)^2}{c}. \label{eq: estimate of error due to commutation}
  \end{align}
  (The second inequality above follows since $x \mapsto 1- \exp(-cx)$ is a concave fucntion with the tangent line $x \mapsto cx$ at $x=0$, so $1- \exp(-c\Delta t) \leq c\Delta t$.)

  The term $R_\ell$ is zero unless reparametrization occurs in the $\ell+1$'st step of the string method.
  Let $r_i$ be the step in which the $i$'th reparametrization occurs.
  By Lemma~\ref{lem: bound on interp time},
  \begin{equation*}
    r_i \geq \left \lceil \frac{\tmin}{\Delta t} \right \rceil i.
  \end{equation*}
  When reparametrization does occur in the $\ell+1$'st step, we have $m(x^\ell) \leq Kh$, but $m(x^{\ell+1}) > Kh$. However, since $S_{\Delta t}$ is Lipschitz with constant $\exp(L\Delta t)$,
  \begin{equation*}
    m(x^{\ell+1}) \leq \exp(L \Delta t) K h.
  \end{equation*}
  Therefore, by Lemma~\ref{lem: interp error}
  \begin{equation*}
    R_\ell \leq \frac{\exp(L \Delta t) K h}{2}.
  \end{equation*}
  Thus, we have
  \begin{align}
    \sum_{j=0}^{\ell-1} \exp(-c(\ell-j-1) \Delta t) R_j &\leq \sum_{j=0}^{\infty} \exp(-c(\ell-j-1) \Delta t) R_j \nonumber \\
                                                           &\leq \frac{\exp(L \Delta t) K h}{2} \sum_{i=1}^\infty \exp \left (-c i \Delta t r_i \right ) \nonumber \\
                                                           &\leq \frac{\exp(L \Delta t) K h}{2} \sum_{i=1}^\infty \exp \left (-c i \left \lceil \frac{\tmin}{\Delta t} \right \rceil \Delta t \right ) \nonumber \\
                                                           &\leq \frac{\exp(L \Delta t) K h}{2} \frac{1}{1-\exp(-c\tmin)}. \label{eq: estimate of the error due to reparametrization}
  \end{align}

  We conclude that
  \begin{align*}
    W(Ix^\ell) &\leq \exp(-c \ell \Delta t) W(Ix^0) + \frac{\exp(L \Delta t) K h}{2(1-\exp(-c\tmin))} + \frac{D \Delta t^{q-1}+C(Kh)^2}{c} \\
               &\leq  \exp(-c \ell \Delta t) r_0 + \frac{\exp(L \Delta t) K h}{2(1-\exp(-c\tmin))} + \frac{D \Delta t^{q-1}+C(Kh)^2}{c}.
  \end{align*}
  (The second inequality above follows from the Lipschitz property of $W$~\eqref{eq: Lipschitz property of Lyapunov function} and ${W(\m)=0}$ using that $\hd(Ix^0, \m) \leq r_0$.)
  Therefore, for some $N_0 >0$ depending only on $r_0$, $\ell \geq N_0$ implies
  \begin{equation*}
    W(Ix^\ell) \leq  \frac{\exp(L \Delta t) K h}{(1-\exp(-c\tmin))} + \frac{2(D \Delta t^{q-1}+C(Kh)^2)}{c}.
  \end{equation*}
  Thus, for $\ell \geq N_0$,
  \begin{equation*}
    \hd(Ix^\ell, \m) \leq \alpha^{-1} \left (\frac{\exp(L \Delta t) K h}{(1-\exp(-c\tmin))} + \frac{2(D \Delta t^{q-1}+C(Kh)^2)}{c} \right ) =: e(h,\Delta t)
  \end{equation*}
  by~\eqref{eq: equivalence with distance of Lyapunov function} and the monotonicity of $\alpha$. We observe that since $\alpha(0) = 0$ and $\alpha$ is strictly increasing
  \begin{equation*}
    \lim_{h, \Delta t \rightarrow 0^+} e(h,\Delta t) = 0,
  \end{equation*}
  which concludes the proof.
\end{proof}

\appendix

\section{Proof of Lemma~\ref{lem: bound on distance gives bound on haus distance}}
\label{apx: proof of one-sided distance lemma}

\begin{figure}
  \includegraphics[width=\textwidth]{./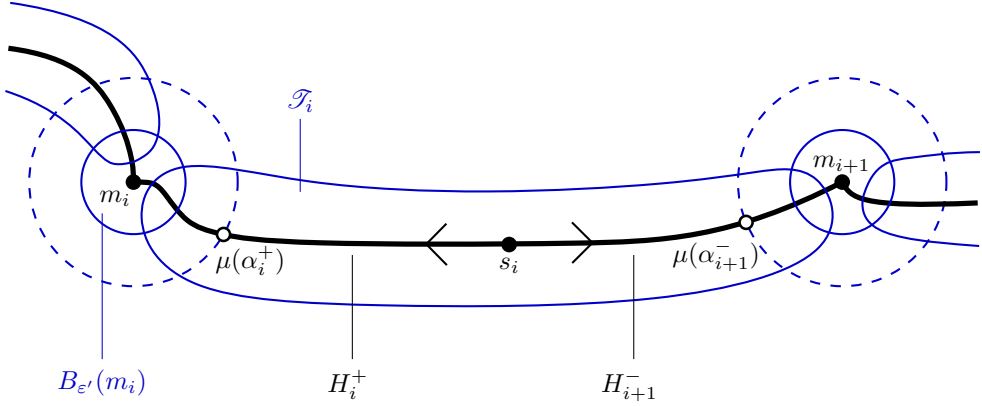}
  \caption{A depiction of the neighborhoods constructed in the proof of Lemma~\ref{lem: bound on distance gives bound on haus distance}. The bold black line represents $\m$. The arrows point in the direction of gradient descent along the heteroclinics $H_i^+$ and $H_i^-$ connecting $s_i$ to $m_i$ and $m_{i+1}$. The blue dashed balls are $B_{2 \eps'}(m_i)$ and $B_{2 \eps'}(m_{i+1})$.}
  \label{fig: distance lemma proof}
\end{figure}
\begin{proof}
  We adapt the argument based on tubular neighborhoods outlined after the statement of Lemma~\ref{lem: bound on distance gives bound on haus distance}. To allow for kinks at the minima, we devise an analogue of a tubular neighborhood consisting of tubes where $\m$ is smooth connected by balls surrounding the minima as in Figure~\ref{fig: distance lemma proof}.

  Let $\eps >0$.
  Since each minimum $m_i$ is linearly stable, there exists $\eps_i >0$ so that $x \in B_{\eps_i}(m_i)$ implies $\lvert S_t x - m_i \rvert$ is strictly decreasing with $t$.
  Let
  \begin{equation*}
    \eps' = \min \left \{\frac{\eps}{6}, \frac{\min_{i=1, \dots, k} \eps_i}{2} \right \}.
  \end{equation*}
  Observe that the balls $B_{2\eps'}(m_i)$ are disjoint, since the basins of attraction of the minima $m_i$ under the gradient flow are disjoint.

  Let $H_i^+$ be the heteroclinic connecting $m_i$ with $s_i$, and $H_{i+1}^-$ the heteroclinic connecting $s_i$ with $m_{i+1}$, as in Figure~\ref{fig: mep notation}. We claim that for any $i=1,\dots, k-1$, the curve segment
  \begin{equation*}
    M_i = H_i^+ \cup H_{i+1}^- \setminus B_{\frac{\eps'}{2}}(m_i) \cup B_{\frac{\eps'}{2}}(m_{i+1})
  \end{equation*}
  is a $\C^2$-embedded submanifold of $\Real^d$.
  To prove this, observe that in an open neighborhood of the the saddle point $s_i$, $M_i$ coincides with the local unstable manifold of $s_i$.
  Since $V \in \C^3$, by the invariant manifold theorem\cite[Theorem~5.2]{shub_stability_1987}, the local unstable manifold is a $\C^2$-embedded submanifold of $\Real^d$.
  Now since $M_i$ lies on the pair of heteroclinics $H_i^+$ and $H_{i+1}^-$, for some time $T>0$, the whole of $M_i$ is contained in the image under $S_T$ of the part coinciding with the local unstable manifold.
  The flow map $S_T$ is a diffeomorphism,
  and since $V \in \C^3$ we have $S_T \in \C^2$ by~\cite[Theorem~1.3]{Chicone:ODEsApplications2006}.
  Thus, $M_i$ is a $\C^2$-embedded submanifold.

  It follows by~\cite[Theorem~10.19]{lee_manifolds_2002} that $M_i$ has a tubular neighborhood.
  (The statement of \cite[Theorem~10.19]{lee_manifolds_2002} assumes a $\C^\infty$-embedded submanifold. However, the proof requires only a $\C^2$-embedded submanifold.)
  In particular, for any radius $\rho_i >0$ small enough, there exist an open neighborhood $\mathscr{T}_i$ with ${M_i \subset \mathscr{T}_i}$ and a continuous retraction $r_i:\mathscr{T}_i \rightarrow M_i$ such that
  \begin{equation*}
    \lvert x - r_i(x) \rvert < \rho_i.
  \end{equation*}
  For all $i=1,\dots,k-1$, let $0< \rho_i <\eps'$ be small enough that a tubular neighborhood exists.
  Since the closures of the curve segments $M_i$ are disjoint, we may assume that the tubular neighborhoods $\mathscr{T}_i$ are disjoint, reducing $\rho_i$ if necessary. Moreover, we may assume that $\mathscr{T}_i \cap B_{2\eps'}(m_k) \neq \emptyset$ only for $k=i,i+1$.

  Since $\m$ does not intersect itself, it is homeomorphic to an interval. That is, there exists a continuous bijection $\mu:[0,1] \rightarrow \m$ with continuous inverse $\mu^{-1} : \m \rightarrow [0,1]$. (Here, we equip $\m$ with the subspace topology inherited from $\Real^d$.)
  We may assume that for some $0=\alpha_1 < \alpha_2 < \dots < \alpha_k=1$,
  \begin{equation*}
    \mu(\alpha_i)=m_i. 
  \end{equation*}
  For each $i=1, \dots, k-1$, define
  \begin{equation*}
    \alpha_i^+ := \sup  \{\alpha \in [0,1] : \mu(\alpha) \in B_{2\eps'}(m_i)\},
  \end{equation*}
  and let $\alpha_k^+=1$.
  Similarly, for $i=2, \dots, k$, define
  \begin{equation*}
    \alpha_{i+1}^- := \inf  \{\alpha \in [0,1] : \mu(\alpha) \in B_{2\eps'}(m_{i+1})\},
  \end{equation*}
  and let $\alpha_1^-=0$.
  We claim that
  \begin{equation}
    \mu(\alpha) \in B_{2\eps'}(m_i) \text{ iff } \alpha \in [\alpha_i^-, \alpha_i^+].
    \label{eq: main property of alphaplus}
  \end{equation}
  To prove~\eqref{eq: main property of alphaplus}, observe that by continuity of $\mu$, $\mu(\alpha_i^+) \in \partial B_{2\eps'}(m_i)$. In fact, since $\lvert S_t( x) - m_i \rvert$ is strictly decreasing on $B_{2\eps'}(m_i)$, the heteroclinic trajectory $H_i^+$ intersects $\partial B_{2\eps'}(m_i)$ only at $\mu(\alpha^+_i)$.
  If we had $\lvert \mu(\alpha) -m_i \rvert > 2 \eps'$ for some $ \alpha \in [\alpha_i, \alpha_i^+]$, the intermediate value theorem would imply the existence of a second distinct point of intersection between $H_i^+$ and $\partial B_{2\eps'}(m_i)$, since $\mu$ is a bijection. Therefore, $\mu([\alpha_i, \alpha_i^+]) \subset B_{2\eps'}(m_i)$. A similar argument involving the heteroclinic $H_i^-$ shows $\mu([\alpha_i^-, \alpha_i]) \subset B_{2\eps'}(m_i)$, and~\eqref{eq: main property of alphaplus} follows.

  We will now construct an analogue $\Pi$ of the retraction $r_i$ for the entire $\mep$ $\m$.
  For convenience, instead of mapping onto $\m$, $\Pi$ will take values in $[0,1]$, the domain of the parametrization $\mu$ of $\m$.
  We handle kinks at minima by letting $\Pi$ take a constant value on a ball surrounding each minimum.
  We begin by defining for each $i=1, \dots, k-1$ the three continuous functions
  \begin{alignat*}{3}
    & p_i^+: B_{\eps'}(m_i) \rightarrow \Real && \qquad \text{ by } p_i^+ (x) = \alpha_i^+, \\
    & p_i: \mathscr{T}_i \rightarrow \Real && \qquad \text{ by }  p_i(x) = \min \{\alpha_{i+1}^-,\max \{ \alpha_i^+, \mu^{-1} \circ r_i(x)\}\} , \text{ and } \\
    & p_{i+1}^-:B_{\eps'}(m_{i+1}) \rightarrow \Real && \qquad \text{ by } p_{i+1}^-(x) = \alpha_{i+1}^-. \\
  \end{alignat*}
  These functions will be the constituent parts of our retraction $\Pi$.
  We claim that $p_i^-$, $p_i$, and $p_i^+$ agree where their domains intersect.
  To verify this, it will suffice to show that
  \begin{equation*}
    \begin{aligned}
      \mu^{-1} \circ r_i(x) &\leq \alpha_i^+ &&\text{ for } x \in B_{\eps'}(m_i) \cap \mathscr{T}_i, \text{ and } \\
      \mu^{-1} \circ r_i(x) &\geq \alpha_{i+1}^- &&\text{ for } x \in B_{\eps'}(m_{i+1}) \cap \mathscr{T}_i.
    \end{aligned}
  \end{equation*}
  Let $x \in B_{\eps'}(m_i) \cap \mathscr{T}_i$.
  Then
  \begin{equation*}
    \lVert r_i(x) -m_i\rVert \leq \lVert r_i(x) - x \rVert + \lVert x - m_i \rVert
    \leq \rho_i+\eps' \leq 2\eps',
  \end{equation*}
  and so $r_i(x) \in B_{2\eps'}(m_i)$.
  Therefore, $\mu^{-1} \circ r_i(x) \leq \alpha^+_i$ by~\eqref{eq: main property of alphaplus}.
  Similarly, $x \in B_{\eps'}(m_{i+1}) \cap \mathscr{T}_i$ implies $\mu^{-1} \circ r_i(x) \geq \alpha_{i+1}^-$.
  Thus, the three functions agree on the intersections of their domains. Therefore, since the domains are open, they define a single continuous function $\pi_i$ on
  \begin{equation*}
    \mathscr{U}_i := B_{\eps'}(m_i) \cup \mathscr{T}_i \cup  B_{\eps'}(m_{i+1}).
  \end{equation*}

  We now patch together the functions $\pi_i$ to construct our retraction $\Pi$.
  First, define $\tilde \pi_i: \mathscr{U}_i \rightarrow [0,1]$ by
  \begin{equation*}
    \tilde \pi_i(x) = \pi_i(x) - \sum_{j=2}^i \alpha_j^+ - \alpha_j^-.
  \end{equation*}
  \ignore{
    Observe that $\tilde \pi_i$ takes values in
    \begin{equation*}
      \left [ \alpha_i^- - \sum_{j=2}^{i-1} \alpha_j^+ - \alpha_j^-, \alpha_{i+1}^+ -  \sum_{j=2}^{i} \alpha_j^+ - \alpha_j^- \right ]
    \end{equation*}
  }
  The functions $\tilde \pi_i$ are continuous, have open domains, and agree on the intersections of their domains.
  In fact, for $i=1,\dots, k-1$, the functions $\tilde \pi_i$ and $\tilde \pi_{i+1}$ both take the value
  \begin{equation*}
    \alpha_{i+1}^- - \sum_{j=2}^{i+1} \alpha_j^+ - \alpha_j^-.
  \end{equation*}
  on $B_{\eps'}(m_{i+1}) =  \mathscr{U}_i \cap \mathscr{U}_{i+1} $.
  The domains $\mathscr{U}_i$ and $\mathscr{U}_j$ do not intersect if $\lvert i-j \rvert \geq 2$.
  Therefore, the functions $\tilde \pi_i$ define a single continuous function $\Pi$ on
  \begin{equation*}
    \mathscr{U} := \cup_{i=1}^{K-1} \mathscr{U}_i.
  \end{equation*}

  Since $\mathscr{U}$ is open and $\m$ is compact, there exists some $\eta >0$ so that $d(x, \m) \leq \eta$ implies $x  \in \mathscr{U}$.
  We claim that the conclusion of the lemma holds for this $\eta$.
  To see this, let  $\phi \in \curves(m_1,m_k)$ with $d(\phi, \m) \leq \eta$.
  We have
  \begin{equation*}
    \Pi(m_1) = \alpha_1^+ \text{ and }
    \Pi(m_k) = \alpha_k^--\sum_{j=1}^{k-1} \alpha_j^+ - \alpha_j^-.
  \end{equation*}
  Therefore, by the intermediate value theorem,
  \begin{equation}\label{eq: values taken by Pi}
    \Pi(\phi) \supset  \left [\alpha_1^+,\alpha_k^--\sum_{j=1}^{k-1} \alpha_j^+ - \alpha_j^- \right ].
  \end{equation}

  Now let $x \in \m$. By~\eqref{eq: values taken by Pi}, there exists some $y \in \phi$ so that $\Pi(y)=\Pi(x)$.
  To complete the proof, we will show that $\lvert x - y \rvert \leq \eps$.
  We distinguish two cases: Either $\Pi(x) \neq \Pi(m_i)$ for any $i \in \{1, \dots, k\}$ or else $\Pi(x)= \Pi(m_i)$ for some $i \in \{1, \dots, k\}$. In the first case, $y$ lies in a tube $\mathscr{T}_i$ for some $i$, and $x = r_i(y)$. Therefore,
  \begin{equation*}
    \lvert x-y \rvert < \rho_i < \eps' < \eps.
  \end{equation*}
  In the second case, we have $\Pi(y)=\Pi(x)=\Pi(m_i)$. We claim that for any $z \in \mathscr{U}$,
  \begin{equation}
    \Pi(z) = \Pi(m_i) \text{ implies } z \in B_{3 \eps'}(m_i).
    \label{eq: level sets of Pi at minima}
  \end{equation}
  If so, then
  \begin{equation*}
    \lvert x - y \rvert \leq \lvert x - m_i \rvert + \lvert y-m_i \rvert \leq 3 \eps' + 3 \eps' \leq \eps.
  \end{equation*}

  To prove~\eqref{eq: level sets of Pi at minima}, we observe that if $\Pi(z) = \Pi(m_i)$, then $z \in \mathscr{U}_i \cup \mathscr{U}_{i-1}$. Suppose that $z \in \mathscr{U}_i$. On the one hand, if $z \in B_{\eps'}(m_i)$, then~\eqref{eq: level sets of Pi at minima} holds trivially. On the other hand, if $z \notin B_{\eps'}(m_i)$, then $z \in \mathscr{T}_i$. In that case, since $\Pi(z) = \Pi(m_i)$,
  \begin{equation*}
    \mu^{-1} \circ r_i(z) \in [\alpha_i, \alpha_i^+],
  \end{equation*}
  so by~\eqref{eq: main property of alphaplus}, $r_i(z) \in B_{2\eps'}(m_i)$. Therefore,
  \begin{equation*}
    \lvert z - m_i \rvert \leq \lvert z - r_i(z) \rvert + \lvert r_i(z) - m_i \rvert \leq \eps' + 2 \eps' = 3 \eps'.
  \end{equation*}
  This concludes the proof of~\eqref{eq: level sets of Pi at minima}, hence the proof of Lemma~\ref{lem: bound on distance gives bound on haus distance}.
\end{proof}

\section{Proof of Theorem~\ref{thm: existence of Lyapunov function}}
\label{apx: proof of existence of lyapunov function}

\begin{proof}
  Let $B \subset \Real^d$ be a compact, forward invariant set containing $\m$.
  We will construct a Lyapunov function for $\m$, defined on the set $\curves(m_1,m_2,B)$.

  Define $G_k : [0,\infty) \rightarrow [0,\infty)$ by
  \begin{equation*}
    G_k(x) := \max \{0,x-1/k\}.
  \end{equation*}
  Let $c >0$.
  For any $\phi \in \curves(m_1,m_2,B)$, define
  \begin{equation*}
    V_k(\phi) := \sup_{\tau \geq 0} \exp(c\tau) G_k(\hd(S_\tau(\phi), \m)).
  \end{equation*}
  We note that $V_k$ is finite, since asymptotic stability on $B$ implies $G_k(S_\tau(\phi)) = 0$ for sufficiently large $\tau$.
  We will use the functions $V_k$ for $k=1,\dots, \infty$ to construct a Lyapunov function.

  First, we observe that $V_k$ has the contraction property~\eqref{eq: contraction property of Lyapunov function} of a Lyapunov function, since
  \begin{align}
    V_k(S_t (\phi))
    &= \sup_{\tau \geq 0} \exp(c\tau) G_k(\hd(S_{t+\tau} (\phi), \m))
      \nonumber \\
    &= \exp(-ct) \sup_{\tau \geq 0} \exp(c(t+\tau)) G_k(\hd(S_{t+\tau} (\phi), \m)) \nonumber \\
    &= \exp(-ct) \sup_{\tau \geq t} \exp(c\tau) G_k(\hd(S_\tau (\phi), \m))
      \nonumber \\
    &\leq \exp(-ct) V_k(\phi).
      \label{eq: contraction property of Vk}
  \end{align}

  We now find a Lipschitz constant for $V_k$.
  $\m$ is asymptotically stable with uniform convergence on $B$, and so for each $k \in \mathbb{N}$ there exists $T(1/k)$ so that for any $\phi \in \curves(m_1,m_2,B)$,
  \begin{equation*}
    \hd (S_{t} \phi, \m) \leq 1/k \text{ whenever } T(1/k) \leq t.
  \end{equation*}
  Therefore, for any $\phi, \psi \in \curves(m_1,m_2,B)$, we have
  \begin{align*}
    \lvert V_k(\phi) - V_k(\psi) \rvert
    &\leq
      \sup_{\tau=0} \exp(c\tau)
      \lvert G_k(\hd(S_\tau \phi,\m)) - G_k(\hd (S_\tau \psi, \m)) \rvert \\
    &=
      \sup_{0 \leq \tau \leq T(1/k)} \exp(c\tau)
      \lvert G_k(\hd(S_\tau \phi,\m)) - G_k(\hd (S_\tau \psi, \m)) \rvert \\
    &\leq
      \sup_{0 \leq \tau \leq T(1/k)} \exp(c\tau)
      \lvert \hd(S_\tau \phi,\m) - \hd (S_\tau \psi, \m) \rvert \\
    &\leq \sup_{0 \leq \tau \leq T(1/k)} \exp(c\tau) \hd(S_\tau \phi, S_\tau \psi) \\
    &\leq \sup_{0 \leq \tau \leq T(1/k)}
      \exp(c\tau) \exp(L\tau) \hd(\phi, \psi) \\
    &\leq \exp((L+c) T(1/k)) \hd(\phi, \psi), \\
  \end{align*}
  It follows that
  \begin{equation}\label{eq: Lipschitz constant for Vk}
    L_k := \exp((L+c) T(1/k))
  \end{equation}
  is a Lipschitz constant for $V_k$.

  We claim that $V: \curves(m_1,m_2, B) \rightarrow [0,\infty)$ defined by
  \begin{equation*}
    V(\phi) := \sum_{k=1}^\infty \frac{L_k^{-1} V_k(\phi)}{2^k}
  \end{equation*}
  is a Lyapunov function for $\m$ on $B$.
  To see that the sum defining $V$ converges, we observe that
  \begin{equation*}
    0 \leq V_k(\phi) \leq V_k(\m) + L_k \hd (\phi,\m) \leq L_k \hd (\phi,\m),
  \end{equation*}
  which implies
  \begin{equation}\label{eq: upper bound on V by distance}
    V(\phi) \leq \hd (\phi,\m).
  \end{equation}
  The contraction property~\eqref{eq: contraction property of Lyapunov function} and Lipschitz bound~\eqref{eq: Lipschitz property of Lyapunov function} in the definition of Lyapunov function follow from~\eqref{eq: contraction property of Vk} and~\eqref{eq: Lipschitz constant for Vk}, respectively.
  The upper bound in property~\eqref{eq: equivalence with distance of Lyapunov function} follows from~\eqref{eq: upper bound on V by distance}.

  Thus, it remains only to show the lower bound in~\eqref{eq: equivalence with distance of Lyapunov function}; that is, we must show
  \begin{equation*}
    \alpha(\hd(\phi,\m)) \leq V(\phi)
  \end{equation*}
  for some continuous strictly increasing function $\alpha$ with $\alpha (0) = 0$.
  To see this observe that if $\hd(\phi,\m) \geq 1$, then
  \begin{align*}
    V(\phi) &\geq \frac{1}{2^2 L_2} G_2(\hd(\phi, \m)) \\
            &\geq \frac{1}{2^2 L_2} \left (\hd(\phi,\m) - \frac{1}{2} \right ) \\
            &\geq \frac{1}{2^2 \exp((c+L)T(1/2))} \frac{1}{2} \hd(\phi,\m).
  \end{align*}
  Similarly, if $1/(k-1) > \hd(\phi,\m) \geq 1/k$, then
  \begin{equation*}
    V(\phi) \geq \frac{1}{2^{k+1} \exp\left((c+L)T\left(\frac{1}{k+1}\right)\right)} \frac{1}{k+1} \hd(\phi,\m).
  \end{equation*}
\end{proof}

\section{Statement and Proof of Lemma~\ref{lem: existence of invariant nbhd for sm}}
\label{apx:  existence of invariant nbhd for sm}
\begin{lemma}\label{lem: existence of invariant nbhd for sm}
  There exist $r_0 >0$,  $h_0>0$, and $\tmax >0$ such that if $\hd(Ix^0, \m) \leq r_0$,  $m(x^0) \leq h_0$, and $\Delta t \leq t_0$, then $Ix^{\ell} \in \mathscr{W}$ for all $\ell \geq 0$.
\end{lemma}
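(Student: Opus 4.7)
The plan is to induct on $n$, maintaining the hypothesis $I x^n \in \mathscr{W}$ and $W(I x^n) \leq R$ for a pre-chosen threshold $R$. First I would choose $\eps > 0$ small enough that the closed Hausdorff $\eps$-neighborhood of $\m$ is contained in the interior of $B$, which is possible since $B$ contains an open neighborhood of $\m$. Using uniform stability (Theorem~\ref{thm: uniform and asymptotic stabilit of mep}), pick $\delta \in (0, \eps/2]$ so that $\hd(\phi, \m) \leq \delta$ implies $\hd(S_t \phi, \m) \leq \eps/2$ for all $\phi \in \curves(m_1, m_k)$ and $t \geq 0$. Then pick $R > 0$ so that $\alpha^{-1}(R) \leq \delta$, where $\alpha$ is the comparison function from~\eqref{eq: equivalence with distance of Lyapunov function}; by that inequality, $W(\phi) \leq R$ forces $\hd(\phi, \m) \leq \delta$.

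For the inductive step, assume $W(I x^n) \leq R$ and $I x^n \in \mathscr{W}$ for $n = 0, \ldots, \ell$. The essential task is to verify that $I x^{\ell+1} \in \mathscr{W}$, since otherwise the Lyapunov function estimate is not even defined at the new iterate. By the induction hypothesis, $\hd(I x^\ell, \m) \leq \delta$, so $\hd(S_{\Delta t} I x^\ell, \m) \leq \eps/2$ by uniform stability. Using Lemma~\ref{lem: interp error}, Assumption~\ref{asm: truncation error}, and Lemma~\ref{lem: error in commuting evolution and interpolation} as in the decomposition leading to~\eqref{eq: single step decrease in lyapunov function}, the per-step errors $R_\ell$, $T_\ell$, $C_\ell$ are bounded by $\exp(L \Delta t) K h_0 / 2$, $D \Delta t^q$, and $C (K h_0)^2 \Delta t$ respectively (using $m(x^n) \leq K h_0$ at every step, which holds by the design of the algorithm together with the initial condition $m(x^0) \leq h_0$). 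Shrinking $h_0$ and $\tmax$ makes $R_\ell + T_\ell + C_\ell \leq \eps/2$, so by the triangle inequality $\hd(I x^{\ell+1}, \m) \leq \eps$, placing $I x^{\ell+1}$ in $\curves(m_1, m_k, B) \subset \mathscr{W}$.

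Once membership in $\mathscr{W}$ is established, the contraction property~\eqref{eq: contraction property of Lyapunov function} and Lipschitz property~\eqref{eq: Lipschitz property of Lyapunov function} of $W$ yield the single-step estimate~\eqref{eq: single step decrease in lyapunov function}, and iterating produces the variation of parameters formula~\eqref{eq: variation of parameters formula}. The geometric-sum bounds~\eqref{eq: estimate of error due to commutation} and~\eqref{eq: estimate of the error due to reparametrization} from the proof of Theorem~\ref{thm: convergence} (which rely on Lemma~\ref{lem: bound on interp time} to limit the frequency of reparametrization) then give
$$W(I x^{\ell+1}) \leq r_0 + \frac{\exp(L \tmax) K h_0}{2 (1 - \exp(-c \tmin))} + \frac{D \tmax^{q-1} + C (K h_0)^2}{c},$$
where I have used the Lipschitz bound $W(I x^0) \leq \hd(I x^0, \m) \leq r_0$. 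Taking $r_0, h_0, \tmax$ small enough, the right side is at most $R$, closing the induction.

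The main obstacle is precisely the circular dependence just noted: to bound $W(I x^{\ell+1})$ via the contraction property, one needs $I x^{\ell+1} \in \mathscr{W}$, yet $\mathscr{W}$-membership is itself a consequence of the bound. Breaking the circle requires a separate argument, namely the triangle-inequality computation in the second paragraph, which uses uniform stability to control $\hd(S_{\Delta t} I x^\ell, \m)$ without referring to $W$. Once this purely Hausdorff argument places the new iterate inside $\mathscr{W}$, the Lyapunov machinery can then be brought to bear to propagate the quantitative bound $W(I x^{\ell+1}) \leq R$.
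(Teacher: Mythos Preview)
Your proposal is correct and shares the paper's two-stage inductive structure: first establish $Ix^{\ell+1} \in \mathscr{W}$ by a pure Hausdorff-distance argument, then apply the Lyapunov machinery to propagate a sublevel bound. Two organizational differences are worth noting. To control $\hd(S_{\Delta t} Ix^\ell, \m)$ you invoke Theorem~\ref{thm: uniform and asymptotic stabilit of mep} (uniform stability) directly, whereas the paper stays inside the Lyapunov framework and uses the contraction~\eqref{eq: contraction property of Lyapunov function} together with the comparison~\eqref{eq: equivalence with distance of Lyapunov function} to reach the same conclusion; both routes encode the same information. Second, to close the induction you invoke the full variation-of-parameters bound~\eqref{eq: variation of parameters formula} from step~$0$ and reuse the geometric-sum estimates~\eqref{eq: estimate of error due to commutation} and~\eqref{eq: estimate of the error due to reparametrization} wholesale. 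The paper instead distinguishes two cases: a single-step estimate at non-reparametrization steps, and a multi-step estimate over the span since the previous reparametrization, using Lemma~\ref{lem: bound on interp time} to guarantee that at least time~$\tmin$ of contraction has accrued to absorb the $O(h)$ reparametrization error (which carries no $\Delta t$ factor). Your global approach is more streamlined; the paper's per-cycle argument is more self-contained and makes the role of $\tmin$ in controlling the reparametrization error explicit at the level of the invariance proof itself.
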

\begin{proof}

  For $r > 0$, let
  \begin{equation*}
    \NN(r) := \{ \phi \in \curves(m_1,m_2) : \hd(\phi, \m) \leq r\}.
  \end{equation*}
  By construction, the domain $\mathscr{W}$ of $W$ contains $\NN(r_1)$
  for some $r_1>0$.
  Let
  \begin{equation*}
    r_0 := \alpha \left ( \frac{r_1}{2} \right ),  \text{ and define }
    \mathscr{M}:= \{\phi \in \mathscr{W} : W(\phi) \leq r_0\}.
  \end{equation*}
  We claim
  \begin{equation*}
    \NN(r_0) \subset \mathscr{M}
    \subset \NN(r_1) \subset \mathscr{W}.
  \end{equation*}
  To prove this, we observe that by~\eqref{eq: Lipschitz property of Lyapunov function} and~\eqref{eq: equivalence with distance of Lyapunov function},
  \begin{equation*}
    \alpha(x) \leq x
  \end{equation*}
  for all $x \in [0,\infty)$. Therefore, $r_0 \leq r_1/2$, so $\NN(r_0) \subset \NN(r_1) \subset \mathscr{W}$.
  Having established that $\NN(r_0)$ lies in the domain $\mathscr{W}$ of $W$, inequality~\eqref{eq: Lipschitz property of Lyapunov function} implies that $\NN(r_0) \subset \mathscr{M}$.
  Finally, if $\phi \in \mathscr{M}$, then $\alpha(\hd(\phi, \m)) \leq r_0$ by~\eqref{eq: equivalence with distance of Lyapunov function}. Thus,
  \begin{equation*}
    \hd(\phi, \m) \leq \alpha^{-1} (r_0) \leq \frac{r_1}{2} < r_1
  \end{equation*}
  by monotonicity of $\alpha$, hence $\mathscr{M} \subset \NN(r_1)$.

  We will show that for some $h_0 >0$ and $\tmax >0$, if $Ix^0 \in \mathscr{M}$,  $m(x^0) \leq h \leq h_0$, and $\Delta t \leq \tmax$, then $Ix^\ell \in \mathscr{M}$ for all $\ell \geq 0$.
  First, we prove that if $Ix^\ell \in \mathscr{M}$ with $m(x^\ell) \leq Kh$ and $m(S_{\Delta t} x^\ell) \leq Kh$, then $Ix^{\ell+1} \in \mathscr{M}$. That is, if $Ix^\ell \in \mathscr{M}$ and reparametrization does not occur in the $(\ell+1)$'st step of the string method, then $I x^{\ell+1} \in \mathscr{M}$. To see this, observe that
  \begin{align*}
    \alpha(\hd(S_{\Delta t} I x^\ell,\m)) \leq W(S_{\Delta t} I x^\ell)
    \leq  e^{-c \Delta t} W(I x^\ell) \leq e^{-c \Delta t} r_0,
  \end{align*}
  and so
  \begin{equation*}
    \hd(S_{\Delta t} I x^\ell, \m)
    \leq \alpha^{-1} (e^{-c \Delta t} r_0)
    \leq \alpha^{-1}(r_0) \leq \frac{r_1}{2}.
  \end{equation*}
  Therefore, for $h_0$ and $\tmax$ small enough that
  \begin{equation}\label{eq: first condition on h0}
    D \tmax^q + C (Kh_0)^2 \Delta t \leq \frac{r_1}{2},
  \end{equation}
  we have
  \begin{align*}
    \hd(I x^{\ell+1},\m) &= \hd(I \bar S_{\Delta t} x^\ell, \m) \\
                         &\leq \hd(I \bar S_{\Delta t} x^\ell, IS_{\Delta t} x^\ell ) + \hd (I S_{\Delta t} x^\ell, S_{\Delta t} I x^\ell) + \hd(S_{\Delta t} I x^n, \m) \\
                         &\leq  D \Delta t^q+ C (Kh)^2 \Delta t + \frac{r_1}{2} \\
                         &\leq r_1
  \end{align*}
  by Assumption~\eqref{eq: truncation error of integrator} and Lemma~\ref{lem: error in commuting evolution and interpolation}.
  Thus, $Ix^{\ell+1} \in \NN(r_1) \subset \mathscr{W}$.

  Having shown that $Ix^{\ell+1}$ lies in the domain of $W$, the argument leading to inequality~\eqref{eq: single step decrease in lyapunov function} yields
  \begin{align}\label{eq: lyapunov inequality, no reparametrization}
    W(I x^{\ell+1}) 
                     &\leq  \exp(-c\Delta t) W(I x^\ell) + D \Delta t^q+  C (Kh)^2 \Delta t \\
                     &\leq  \exp(-c\Delta t) r_0 + D \Delta t^q+   C (Kh)^2 \Delta t \nonumber\\
                     &\leq r_0 \nonumber
  \end{align}
  whenever
  \begin{equation}\label{eq: second condition on h0}
   D \tmax^q +  C(Kh_0)^2 \Delta t \leq (1-\exp(-c \Delta t))\frac{r_0}{2}.
  \end{equation}
  This second condition on $h_0$ and $\tmax$ is stronger than the first~\eqref{eq: first condition on h0}, so we conclude that $Ix^{\ell+1}  \in \mathscr{M}$ if~\eqref{eq: second condition on h0} holds.

  Now suppose that $Ix^\ell \in \mathscr{M}$ and $m(x^\ell) \leq h$. Suppose that the first reparametrization after step $\ell$ occurs at step $\ell +r+1$ for some $r \geq 0$.  By the the previous paragraph, $I x^{\ell+q} \in \mathscr{M}$  for $q = 1, \dots, r$. Therefore, inequality~\eqref{eq: lyapunov inequality, no reparametrization} yields
  \begin{equation*}
    W(Ix^{\ell+q}) \leq  \exp(-c\Delta t)  W(Ix^{\ell+q-1}) + D \Delta t^q + C (Kh)^2 \Delta t
  \end{equation*}
  for $q = 1, \dots, r$.
  It follows by~\eqref{eq: Lipschitz property of Lyapunov function} and~\eqref{eq: second condition on h0} that the variation of parameters formula~\eqref{eq: variation of parameters formula} holds up to step $\ell+r$. Using Lemma~\ref{lem: bound on interp time} and this variation of parameters formula, we have
  \begin{align*}
    W(Ix^{\ell+r}) &\leq \exp(-c r \Delta t) W(Ix^\ell) + \frac{(D \Delta t^q+C(Kh)^2) \Delta t(1- \exp(-cr \Delta t))}{1- \exp(-c \Delta t)} \\
                   &\leq \exp(-c \tmin) r_0 + (1- \exp(-c \tmin))\frac{r_0}{2} \\
                   &\leq (1+\exp(-c \tmin))\frac{r_0}{2}.
  \end{align*}

  Reparametrization occurs at the $(\ell+r+1)$'st step. Therefore, $m(S_{\Delta t} x^{\ell+r}) > Kh$. However, since $S_{\Delta t}$ has Lipschitz constant $\exp(L\Delta t)$ and $m(x^{\ell+r}) \leq Kh$,
  \begin{equation*}
    m(S \Delta_t x^{\ell+r}) \leq \exp(L\Delta t) Kh.
  \end{equation*}
  By Assumption~\ref{asm: truncation error}, Lemma~\ref{lem: interp error}, and Lemma~\ref{lem: error in commuting evolution and interpolation}, we then have
  \begin{align*}
    \hd(Ix^{\ell+1},S_{\Delta t} I x^\ell) &= \hd(I R S_{\Delta t}x^\ell,S_{\Delta t} I x^\ell)\\
                                           &\leq \exp(L\Delta t) K h +D \Delta t^q + C (Kh)^2 \Delta t.
  \end{align*}
  Therefore, by arguments similar to those in the previous paragraph, if
  \begin{equation}\label{eq: third condition on h0}
    \exp(L\Delta t) K h_0 +D \Delta t^q+ C(Kh)^2 \Delta t \leq (1 - \exp(-c \tmin)) \frac{r_0}{2},
  \end{equation}
  then $I x^{\ell+r+1} \in \mathscr{W}$ and
  \begin{align*}
    W(Ix^{\ell+r+1})& \leq \exp(-c \Delta t) W(I x^{\ell+r}) +  K'h +D \Delta t^q+ C(Kh)^2 \Delta t \\
                    &\leq (1+\exp(-c \tmin)) \frac{r_0}{2} + (1 - \exp(-c \tmin)) \frac{r_0}{2} \\
                    &= r_0.
  \end{align*}
  Thus, $I x^{\ell+r+1} \in \mathscr{M}$.

  Let $h_0$ and $\tmax$ satisfy~\eqref{eq: second condition on h0} and~\eqref{eq: third condition on h0}. We have seen that if $Ix^\ell \in \mathscr{M}$, $m(x^\ell) \leq h_0$, and $\Delta t \leq \tmax$, then the trajectory of the string method remains in $\mathscr{M}$ until after the first reparametrization, say at step $r$. When the reparametrization occurs, we have $I x^r \in \mathscr{M}$ and $m(x^r) \leq h$. Therefore, by induction, if $Ix^0 \in \mathscr{M}$ and $m(x^0) \leq h$, then $Ix^\ell \in \mathscr{M}$ for all $\ell \geq 0$.
\end{proof}

\section*{Acknowledgement}
We would like to thank Eric {Vanden-Eijnden} for discussions that motivated this work.

\end{document}